%
\documentclass[preprint,review,1p]{elsarticle}
 \usepackage{amssymb}
 \usepackage{amsmath}
 \usepackage{amsthm}
\newtheorem{theorem}{Theorem}[section]

\newtheorem{lemma}[theorem]{Lemma}

\journal{Nonlinear Analysis: Theory, Methods \& Applications}

\begin{document}

\begin{frontmatter}

\title{Thresholds for global existence and blow-up in a general class of doubly dispersive nonlocal wave equations }

\author{H. A. Erbay$^{1}$\corref{cor1}}
    \ead{husnuata.erbay@ozyegin.edu.tr}
\author{S. Erbay$^1$}
    \ead{saadet.erbay@ozyegin.edu.tr}
\author{A. Erkip$^2$}
    \ead{albert@sabanciuniv.edu}
\cortext[cor1]{Corresponding author. Tel: +90 216 564 9489 Fax: +90 216 564 9057}

\address{$^1$ Department of Natural and Mathematical Sciences, Faculty of Engineering, Ozyegin University,  Cekmekoy 34794, Istanbul, Turkey}

\address{$^2$ Faculty of Engineering and Natural Sciences, Sabanci University,  Tuzla 34956,  Istanbul,    Turkey}

\begin{abstract}
    In this article  we study global existence and blow-up of solutions for a general class of nonlocal nonlinear wave equations with power-type nonlinearities, $\displaystyle~ u_{tt}-Lu_{xx}=B(- |u|^{p-1}u)_{xx}, ~~(p>1)$, where the nonlocality enters through two pseudo-differential  operators $L$ and $B$. We establish thresholds for global existence versus blow-up using the potential well method which relies essentially on the ideas suggested by Payne and Sattinger. Our results improve the global existence and blow-up results given in the literature for the present class of nonlocal nonlinear wave equations and cover those given  for many well-known nonlinear dispersive wave equations such as the so-called double-dispersion equation and the traditional Boussinesq-type equations, as special cases.
\end{abstract}

\begin{keyword}
    Nonlocal Cauchy problem  \sep Global existence \sep Blow-up \sep Potential well \sep Boussinesq equation \sep Double dispersion equation.
    \MSC  74H20 \sep 74J30 \sep 74B20
\end{keyword}
\end{frontmatter}

\setcounter{equation}{0}
\section{Introduction}
\noindent
The present paper  considers the general class of  nonlocal  nonlinear wave equations of the form
\begin{equation}
    u_{tt}-Lu_{xx}=B(g(u))_{xx},   \label{nonlocal} \\
\end{equation}
where $u=u(x,t)$ is a real-valued function, $g(u)=- |u|^{p-1}u$ with $p>1$, and $L$ and $B$ are linear pseudo-differential operators  with smooth symbols $l(\xi )$ and $b(\xi )$, respectively, and identifies sharp thresholds of global existence and blow-up for solutions with subcritical initial energy. The important point to notice here is that this study extends the global existence and blow-up results established recently in  \cite{babaoglu} for (\ref{nonlocal}) to the case of $g(u)=- |u|^{p-1}u~$ $(p>1)$, where  linear semigroups, the contraction mapping principle  and the concavity method of Levine are the main tools for proving the global existence and blow-up results. It is also worth pointing out that the present study uses the potential well method based on the concepts of invariant sets suggested by Payne and Sattinger in \cite{payne} as a result of studying energy level sets.

Throughout this paper it is assumed that $L$ and $B$  are elliptic coercive operators. Denoting the orders of $L$ and $B$  by $\rho$ and $-r$, respectively, with $\rho \geq 0$, $r\geq 0$, this requirement is identified with the existence of  positive constants $~c_{1},c_{2},c_{3}$ and $c_{4}~$ so that
\begin{eqnarray}
 &&   c_{1}^{2}(1+\xi ^{2})^{\rho /2}\leq l(\xi )\leq c_{2}^{2}(1+\xi ^{2})^{\rho/2}, \label{bn-a}\\
 &&  c_{3}^{2}(1+\xi ^{2})^{-r/2} \leq b(\xi )\leq c_{4}^{2}(1+\xi^{2})^{-r/2},  \label{bn-b}
\end{eqnarray}%
for all $\xi \in \mathbb{R}$.

The class of nonlocal nonlinear wave equations characterized by (\ref{nonlocal}) has been introduced recently in \cite{babaoglu} as a generalization of the  so-called double dispersion equation \cite{sam1, sam2}
\begin{equation}
    u_{tt} - u_{xx} - \gamma_{1}u_{xxtt} + \gamma_{2}u_{xxxx} = (g(u))_{xx} \label{double-bouss}
\end{equation}
where $\gamma_{1}>0$ and $\gamma_{2}>0$ are constants, and the terms $u_{xxtt}$ and $ u_{xxxx}$ represent dispersive effects. Notice that setting $~B = (1-\gamma_{1}\partial_x^2)^{-1}~$ and $~L = (1-\gamma_{1}\partial_x^2)^{-1}(1-\gamma_{2}\partial_x^2)~$ in (\ref{nonlocal}) yields (\ref{double-bouss}). For other reduction examples of (\ref{nonlocal}), including the "good", improved or sixth-order Boussinesq equation \cite{bou}, the reader is referred to \cite{babaoglu}. In (\ref{nonlocal}), $B$ is a smoothing operator that smooths out the nonlinear term and it is the source of one type of dispersion. To see the latter fact, we rewrite (\ref{nonlocal}) as $\displaystyle ~B^{-1}u_{tt}-B^{-1}Lu_{xx}=g(u)_{xx}$. Here, the first and second terms on the left-hand side reflect the two sources of dispersive  regularization.

For a general function $g(u)$, the  Cauchy problem of (\ref{nonlocal}) with the initial data
\begin{equation}
    u(x,0)=u_{0}(x),~~u_{t}(x,0)=u_{1}(x),  ~~~~x\in \mathbb{R} \label{ini}
\end{equation}
has been studied in  \cite{babaoglu} and some global existence and blow-up results have been established. To make our  exposition self-contained we repeat the global existence and blow-up results of \cite{babaoglu} without proofs. For that purpose, here are the relevant definitions: $~G(u)=\int_{0}^{u}g(z)dz$,  $~\Lambda^{-1}u={\cal F}^{-1}\left(|\xi|^{-1}{\cal F}u\right)$ (where ${\cal F}$ and ${\cal F}^{-1}$ denote the Fourier transform and its inverse, respectively, in the $x$ variable), and
\begin{equation}
    \mathcal{E}(t)={1\over 2}\left\Vert B^{-1/2}\Lambda^{-1}u_{t}(t) \right\Vert^{2}_{L^{2}}
        +{1\over 2}\left\Vert B^{-1/2}L^{1/2}u(t) \right\Vert^{2}_{L^{2}}+\int_{\mathbb{R}}G(u(t))dx.
        \label{dd-energy}
\end{equation}
Then the two theorems about global existence and blow-up of solutions are as follows:
\begin{theorem}\label{theo1.1}(Theorem 6.4 of \cite{babaoglu})
     Assume that $r+{\rho\over 2} \ge 1$, ${r\over 2}+{\rho\over 2}>{1\over 2}$, $s>{1\over 2}$, $g\in C^{[s]+1}(\mathbb{R})$, $u_{0} \in H^{s}(\mathbb{R})$,    $u_{1}\in H^{s-1-{\rho\over 2}}(\mathbb{R})$, $G(u_{0}) \in L^{1}(\mathbb{R})$ and $G(u) \ge 0$ for all $u\in \mathbb{R}$. Then     the Cauchy problem (\ref{nonlocal}) and (\ref{ini}) has a unique global solution
    $u\in C\left( [0,\infty),H^{s}(\mathbb{R})\right)\cap C^{1}\left( [0,\infty),H^{s-1-{\rho\over 2}}(\mathbb{R})\right)$.
\end{theorem}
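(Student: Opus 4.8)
I would follow the classical local-existence-plus-continuation scheme, the only genuinely nontrivial point being that the conserved energy controls just a low-order Sobolev norm, so a bootstrap is needed to propagate the $H^{s}$ regularity.

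\emph{Step 1 (Duhamel formulation and local existence).} Set $\omega(\xi)=|\xi|\,l(\xi)^{1/2}$. On the Fourier side, (\ref{nonlocal})--(\ref{ini}) is equivalent to
\[
u(t)=\cos(t\omega)\,u_{0}+\frac{\sin(t\omega)}{\omega}\,u_{1}
   -\int_{0}^{t}\frac{\sin((t-\tau)\omega)}{\omega}\,\mathcal{F}^{-1}\!\big[\xi^{2}b(\xi)\,\mathcal{F}g(u(\tau))\big]\,d\tau ,
\]
together with the identity obtained by differentiating in $t$. The coercivity bounds (\ref{bn-a})--(\ref{bn-b}) give $|\cos(t\omega)|\le 1$, $|\sin(t\omega)/\omega|\le C(1+t)(1+\xi^{2})^{-(1+\rho/2)/2}$, and
\[
\Big|\tfrac{\xi^{2}b(\xi)}{\omega}\sin((t-\tau)\omega)\Big|\le C(1+\xi^{2})^{(1-r-\rho/2)/2}\le C ,
\]
the last estimate being exactly where $r+\tfrac{\rho}{2}\ge 1$ enters: it guarantees that the Duhamel operator loses no derivatives. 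Since $s>\tfrac12$, $H^{s}(\mathbb{R})$ is a Banach algebra, and since $g\in C^{[s]+1}$ with $g(0)=0$ the superposition operator $u\mapsto g(u)$ is bounded and locally Lipschitz on $H^{s}$. Hence the right-hand side is a contraction on a ball of $C([0,T],H^{s})\cap C^{1}([0,T],H^{s-1-\rho/2})$ for $T$ depending only on $\|u_{0}\|_{H^{s}}+\|u_{1}\|_{H^{s-1-\rho/2}}$, giving a unique maximal solution on $[0,T^{*})$ together with the blow-up alternative: either $T^{*}=\infty$ or $\limsup_{t\to T^{*}}\|u(t)\|_{H^{s}}=\infty$.

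\emph{Step 2 (energy conservation and a priori low-order bound).} Differentiating $\mathcal{E}(t)$ in (\ref{dd-energy}) along the solution and substituting $u_{tt}=Lu_{xx}+B(g(u))_{xx}$, all three terms cancel after using the identity $\Lambda^{-2}\partial_{x}^{2}=-I$ (so that $B^{-1}\Lambda^{-2}(B(g(u))_{xx})=-g(u)$ and $B^{-1}\Lambda^{-2}(Lu_{xx})=-B^{-1}Lu$) together with the self-adjointness of $B^{-1}\Lambda^{-2}$ and $B^{-1}L$; thus $\mathcal{E}(t)=\mathcal{E}(0)$, finite by the hypotheses on the data (here one uses $G(u_{0})\in L^{1}$). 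Because $G\ge 0$, this gives $\tfrac12\|B^{-1/2}L^{1/2}u(t)\|_{L^{2}}^{2}\le \mathcal{E}(0)$ for all $t<T^{*}$, and (\ref{bn-a})--(\ref{bn-b}) yield $\|B^{-1/2}L^{1/2}u\|_{L^{2}}^{2}\ge c\,\|u\|_{H^{(\rho+r)/2}}^{2}$, so $\sup_{t<T^{*}}\|u(t)\|_{H^{(\rho+r)/2}}\le M<\infty$. Since $\tfrac{\rho}{2}+\tfrac{r}{2}>\tfrac12$, Sobolev embedding gives $\sup_{t<T^{*}}\|u(t)\|_{L^{\infty}}\le M'<\infty$.

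\emph{Step 3 (bootstrap and conclusion).} Taking the $H^{s}$ norm in the Duhamel formula and using the multiplier bounds of Step~1 (in particular the uniform-in-$t,\tau$ boundedness on $H^{s}$ of the operator with symbol $\tfrac{\xi^{2}b(\xi)}{\omega}\sin((t-\tau)\omega)$) together with the Moser estimate $\|g(u)\|_{H^{s}}\le C(\|u\|_{L^{\infty}})\|u\|_{H^{s}}$, one obtains
\[
\|u(t)\|_{H^{s}}\le \|u_{0}\|_{H^{s}}+C(1+t)\|u_{1}\|_{H^{s-1-\rho/2}}+C(M')\int_{0}^{t}\|u(\tau)\|_{H^{s}}\,d\tau ,
\]
and Gronwall's inequality bounds $\|u(t)\|_{H^{s}}$ on every finite interval; by the blow-up alternative this forces $T^{*}=\infty$, while differentiating the Duhamel formula gives $u_{t}\in C([0,\infty),H^{s-1-\rho/2})$. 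The main obstacle is precisely Step~3: the conserved energy controls only the low norm $H^{(\rho+r)/2}$, which may lie far below $H^{s}$, so closing the $H^{s}$ estimate requires exploiting both the smoothing of $B$ (the role of $r+\tfrac{\rho}{2}\ge1$) and the tame behaviour of $u\mapsto g(u)$ on $H^{s}$; a secondary technical point is the low-frequency behaviour of $\omega^{-1}$ and of the factor $\Lambda^{-1}$ in one space dimension, which must be tracked carefully in the $u_{1}$ term and in $\mathcal{E}$.
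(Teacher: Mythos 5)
Your proposal is correct and, in substance, identical to the argument behind this theorem: the present paper states it without proof (it is quoted from \cite{babaoglu}), and the proof there follows exactly your scheme — a Duhamel/semigroup representation with the multiplier bound furnished by $r+\tfrac{\rho}{2}\ge 1$ and a contraction argument for local well-posedness with blow-up alternative, conservation of $\mathcal{E}$ together with $G\ge 0$ giving a uniform bound in $H^{(\rho+r)/2}\hookrightarrow L^{\infty}$ (this is where $\tfrac{r}{2}+\tfrac{\rho}{2}>\tfrac12$ enters), and a Moser--Gronwall bootstrap closing the $H^{s}$ estimate on finite time intervals. The one caveat, which you yourself flag, is that the argument needs $\mathcal{E}(0)<\infty$, i.e. $B^{-1/2}\Lambda^{-1}u_{1}\in L^{2}(\mathbb{R})$, which the hypotheses as quoted do not literally guarantee because of the $|\xi|^{-1}$ low-frequency singularity; this is a feature of the statement inherited from \cite{babaoglu} (compare the explicit assumption $B^{-1/2}\Lambda^{-1}u_{1}\in L^{2}$ in Theorem \ref{theo1.2}) rather than a gap in your proof.
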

\begin{theorem}\label{theo1.2}(Theorem 6.5 of \cite{babaoglu})
    Assume that $B^{-1/2}L^{1/2}u_{0} \in L^{2}(\mathbb{R})$, $B^{-1/2}\Lambda^{-1}u_{1}\in L^{2}(\mathbb{R})$, $G(u_{0}) \in L^{1}(\mathbb{R})$.  If $\mathcal{E}(0) <0$ and there is some $\nu >0$ such that
    \begin{equation}
    ug( u) \leq 2( 1+2\nu) G(u) ~~\mbox{ for all }~~u\in \mathbb{R},  \label{dd-blow-con}
    \end{equation}
    then the solution $u(x,t)$ of the Cauchy problem (\ref{nonlocal}) and (\ref{ini}) blows up in finite time.
\end{theorem}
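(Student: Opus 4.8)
The approach is the concavity (Levine-type) method applied to the conserved energy. Multiplying (\ref{nonlocal}) by $B^{-1}\Lambda^{-2}u_t$, integrating over $\mathbb{R}$, and using $\Lambda^{-2}\partial_x^{2}=-I$ together with the self-adjointness of the Fourier multipliers $B^{-1}$, $L$, $\Lambda^{-2}$, one checks that $\mathcal{E}(t)$ in (\ref{dd-energy}) is constant in time; the assumptions on $u_{0},u_{1}$ make $\mathcal{E}(0)$ finite. Assume, for contradiction, that the solution exists on all of $[0,\infty)$ in the energy class, and introduce the nonnegative functional
\[
F(t)=\bigl\Vert B^{-1/2}\Lambda^{-1}u(t)\bigr\Vert_{L^{2}}^{2}.
\]

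First I would differentiate $F$ twice. With $P:=B^{-1/2}\Lambda^{-1}$ (a nonnegative self-adjoint multiplier) one gets $F'(t)=2\langle Pu,Pu_{t}\rangle$ and $F''(t)=2\Vert Pu_{t}\Vert^{2}+2\langle u,B^{-1}\Lambda^{-2}u_{tt}\rangle$. Applying $B^{-1}\Lambda^{-2}$ to (\ref{nonlocal}) converts the equation into $B^{-1}\Lambda^{-2}u_{tt}=-B^{-1}Lu-g(u)$, so
\[
F''(t)=2\bigl\Vert B^{-1/2}\Lambda^{-1}u_{t}\bigr\Vert^{2}-2\bigl\Vert B^{-1/2}L^{1/2}u\bigr\Vert^{2}-2\int_{\mathbb{R}}u\,g(u)\,dx .
\]
Eliminating $\bigl\Vert B^{-1/2}L^{1/2}u\bigr\Vert^{2}$ by energy conservation and then using the hypothesis $u g(u)\le 2(1+2\nu)G(u)$ from (\ref{dd-blow-con}) yields the key inequality
\[
F''(t)\ \ge\ 4(1+\nu)\bigl\Vert B^{-1/2}\Lambda^{-1}u_{t}\bigr\Vert^{2}+4\nu\bigl\Vert B^{-1/2}L^{1/2}u\bigr\Vert^{2}-4(1+2\nu)\mathcal{E}(0).
\]

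Because $\mathcal{E}(0)<0$, the right-hand side is bounded below by the positive constant $-4(1+2\nu)\mathcal{E}(0)$, so $F'$ is eventually positive and $F$ is eventually strictly increasing; hence there is $t_{1}\ge 0$ with $F(t_{1})>0$ and $F'(t_{1})>0$ (the degenerate case $F(0)=F'(0)=0$ being absorbed by this restart). On the other hand the Cauchy--Schwarz inequality gives $(F'(t))^{2}\le 4F(t)\,\Vert B^{-1/2}\Lambda^{-1}u_{t}\Vert^{2}$, and feeding this into the key inequality produces the concavity relation
\[
F(t)F''(t)-(1+\nu)(F'(t))^{2}\ \ge\ 0\qquad(t\ge t_{1}).
\]
Consequently $y(t):=F(t)^{-\nu}$ obeys $y''\le 0$ on $[t_{1},\infty)$ with $y(t_{1})>0$ and $y'(t_{1})=-\nu F(t_{1})^{-\nu-1}F'(t_{1})<0$; a positive concave function with negative slope must vanish at a finite time $T\le t_{1}+F(t_{1})/(\nu F'(t_{1}))$, forcing $F(t)\to\infty$ as $t$ increases to some $T_{\ast}\le T$. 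This contradicts the finiteness of $F$ along a global solution, so the solution must blow up in finite time.

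The step I expect to be the main obstacle is not the algebra but the rigorous justification of the identities for $F'$ and $F''$ at the regularity at which (\ref{nonlocal})--(\ref{ini}) is well posed: one must verify that $F\in C^{2}$, that (\ref{nonlocal}) may legitimately be tested against $B^{-1}\Lambda^{-2}u$ (so that $\langle u,B^{-1}\Lambda^{-2}u_{tt}\rangle=-\Vert B^{-1/2}L^{1/2}u\Vert^{2}-\int_{\mathbb{R}}u g(u)\,dx$), and that the nonlinear integrals $\int_{\mathbb{R}}u g(u)\,dx$ and $\int_{\mathbb{R}}G(u)\,dx$ stay finite along the flow. These are handled by a density/approximation argument or by invoking the extra regularity of the solution supplied by the local existence theory, together with the Sobolev embedding available in one space dimension for the energy class.
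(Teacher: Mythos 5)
Your route is exactly the one the literature (and this paper) intends for this statement: the paper itself quotes Theorem \ref{theo1.2} from \cite{babaoglu} without proof, but the original proof there, and the paper's own closely analogous Theorem \ref{theo3.5}, use precisely the Levine concavity method on the functional $F(t)=\Vert B^{-1/2}\Lambda^{-1}u(t)\Vert_{L^2}^2$ (note $\Vert B^{-1/2}\Lambda^{-1}u_t\Vert_{L^2}=\Vert B^{-1/2}w\Vert_{L^2}$, so your $F$ is the same object as the $H(t)$ of Theorem \ref{theo3.5} up to a factor). Your identities for $F'$ and $F''$, the reduction $B^{-1}\Lambda^{-2}u_{tt}=-B^{-1}Lu-g(u)$, the elimination of $\Vert B^{-1/2}L^{1/2}u\Vert_{L^2}^2$ via energy conservation, the use of (\ref{dd-blow-con}), the Cauchy--Schwarz step and the concavity of $F^{-\nu}$ (i.e.\ Lemma \ref{lem3.4}) are all correct, and the constants check out.

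There is, however, one substantive point your closing contradiction glosses over, and it is not of the ``density/extra regularity'' type you mention: the finiteness of $F$ itself. The stated hypotheses control $B^{-1/2}L^{1/2}u_0$ and $B^{-1/2}\Lambda^{-1}u_1$, but not $B^{-1/2}\Lambda^{-1}u_0$; since $\Lambda^{-1}$ has symbol $|\xi|^{-1}$, no amount of $H^s$ regularity of $u_0$ makes $\Lambda^{-1}u_0$ square integrable near $\xi=0$, so $F(0)$ --- and hence $F(t)$ for every $t$ --- may be $+\infty$, in which case your differential inequalities are vacuous and ``finiteness of $F$ along a global solution'' is not available as a contradiction. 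This is precisely why the authors add the hypothesis $u_0=(v_0)_x$ with $v_0\in L^2(\mathbb{R})$ in their own blow-up result, Theorem \ref{theo3.5}: combined with $u_0\in H^{s_0}(\mathbb{R})$ it yields $B^{-1/2}\Lambda^{-1}u_0\in L^2(\mathbb{R})$, and then $F(t)$ stays finite on the existence interval because $\Vert B^{-1/2}\Lambda^{-1}u(t)\Vert_{L^2}\le\Vert B^{-1/2}\Lambda^{-1}u_0\Vert_{L^2}+\int_0^t\Vert B^{-1/2}w(\tau)\Vert_{L^2}\,d\tau$ with the integrand controlled by the local theory. With that extra assumption (implicit in the source statement) your argument closes; it is also cleaner to finish, as in Theorem \ref{theo3.5}, by feeding the divergence of $F$ into the continuation criterion of Theorem \ref{theo2.1}, which identifies the norm $\Vert w(t)\Vert_{H^{s_0-\rho/2}}$ (equivalently $\Vert B^{-1/2}w(t)\Vert_{L^2}$) that must blow up as $t\to T_{\max}^-$, rather than arguing only by contradiction with global existence.
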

The above-given theorems are fundamental for describing the behavior of the solutions in many possible cases of the nonlinear function $g(u)$, but they do not cover  the situations addressed in this study for the pure power nonlinearities $g(u)=- |u|^{p-1}u$ with $p>1$. Since $G(u)=-{1\over {p+1}} |u|^{p+1}\le 0$, Theorem \ref{theo1.1} does not cover  this particular form of power nonlinearities. This is one source of the motivation for the present study.
 Another source of the motivation is the restriction $\mathcal{E}(0) <0$ in Theorem \ref{theo1.2}. In the case of $g(u)=- |u|^{p-1}u$, the condition (\ref{dd-blow-con}) of Theorem \ref{theo1.2} holds for $\nu = {{p-1}\over 4}$. However it is unclear how to handle the case $\mathcal{E}(0) >0$ and whether a solution will exhibit finite time blow-up   in such a case. To address  these issues we attempt here to characterize   the dichotomy between global existence  and finite time blow-up in the case of the power nonlinearities.

The aim of this study is twofold: to shed light on the issues raised above for the case of $g(u)=- |u|^{p-1}u$ and to provide sharp thresholds for global existence versus blow-up if the initial energy is strictly below a critical energy constant. The main tool of analysis is the potential well method  based on  the ideas of Payne and Sattinger  \cite{payne} (For a detailed description of the potential well method  the reader is referred, for instance, to \cite{liu1, liu2, yacheng1, yacheng2, ohta, wang}). Noting that the potential energy, namely the last two terms in (\ref{dd-energy}), consists of two parts, the linear part which generates the dispersive effect of the operator $B^{-1}L$ and the purely nonlinear part. It is well-known that the threshold for global existence versus blow-up is determined  by the competition between this type of dispersion and nonlinearity. Thus we construct certain best constants that relate these two effects via  a minimization problem. Then, a critical energy constant $d$ (called the "depth of the potential well" in \cite{payne}) at which the effects due the linear and nonlinear parts of the potential energy are balanced is obtained by solving the minimization problem defined for the total energy functional. Considering the subcritical case, namely, assuming that the total energy is less than $d$, we define two sets of solutions: $\Sigma_{+}$ and   $\Sigma_{-}$. The set $\Sigma_{+}$ corresponds to the case where the linear (dispersive) part dominates the nonlinear part while the set $\Sigma_{-}$ corresponds to the opposite case. We prove that the above two sets of solutions are invariant under the flow generated by (\ref{nonlocal}). Based on this  we establish our global existence and finite time blow-up results. In short, the solution of the Cauchy problem  (\ref{nonlocal}) and (\ref{ini}) exists globally in time if  the initial data lies in $\Sigma_{+}$ and it blows up in finite time if the initial data lies in $\Sigma_{-}$. Finally we extend the analysis to the case of an augmented critical energy constant $d(\gamma)$ resulting from an augmented functional involving a parameter $\gamma$ and we analyze the cases where the parameter dependent results apply whereas those of the parameter independent case fail.

The paper is organized as follows. In Section 2 we cover some preliminaries containing  the local existence theorem of \cite{babaoglu} and the energy and momentum conservation laws.   In Section 3, we first define a constrained minimization problem for the two functionals related the linear (dispersive) and nonlinear parts of the potential energy and  find the critical energy constant.  Then, we define the two invariant sets of solutions  and  establish the threshold for global existence versus blow-up. In Section 4, we extend our considerations to the case of a parameter-dependent objective functional and conclude the section with some closing remarks about the comparison between the threshold of Section 3 and the parameter dependent thresholds of Section 4.

Throughout the paper $\widehat f$ represents the Fourier  transform of $f$, defined by $\widehat f(\xi)=\int_\mathbb{R} f(x) e^{-i\xi x}dx $. For $1\leq p < \infty$ the space $L^p(\mathbb{R})$  denotes the Lebesque space of $p-$integrable functions equipped with the norm $\|f\|_{L^p}$. The inner product of $f$ and $g$ in $L^2(\mathbb{R})$ is indicated by $\langle f, g \rangle$.  Also,  $H^s(\mathbb{R})$ is the Sobolev space for which the norm  $\|f\|_{H^{s}}^2=\int_\mathbb{R} (1+\xi^2)^s |\widehat{f}(\xi)|^2 d\xi$ is finite. $C$ is a generic positive constant.

\setcounter{equation}{0}
\section{Preliminaries}
\noindent
In this section we compile some material on the Cauchy problem  for the doubly dispersive nonlinear  nonlocal equation (\ref{nonlocal}) with $g(u)=-|u|^{p-1} u,~p>1$. For convenience we rewrite (\ref{nonlocal}) as a system of partial differential equations and consider the Cauchy problem
\begin{eqnarray}
    &&  u_t=w_x, ~~~~ x\in \mathbb{R}, ~~~~ t>0,\label{system1} \\
    && w_t= Lu_x+B(g(u))_x, ~~~~ x\in \mathbb{R}, ~~~~ t>0, \label{system2} \\
    && u(x,0)=u_{0}(x),~~w(x,0)=w_{0}(x)~~x\in \mathbb{R}. \label{system3}
\end{eqnarray}
We can now rephrase the local existence theorem of \cite{babaoglu} in terms of the pair $(u, w)$ as follows:
\begin{theorem}\label{theo2.1}
    Assume that $r+{\rho\over 2} \ge 1$,  $s>{1\over 2}$, $g\in C^{[s]+1}(\mathbb{R})$, $(u_{0},w_{0}) \in H^{s}(\mathbb{R})\times H^{s-{\rho\over 2}}(\mathbb{R})$. Then there is some $T_{\max}>0$ such that the Cauchy problem  (\ref{system1})-(\ref{system3}) has a unique solution $(u, w)\in C\left( [0,T_{\max}),H^{s}(\mathbb{R})\right)\times C\left( [0,T_{\max}),H^{s-{\rho\over 2}}(\mathbb{R})\right)$. The maximal time $T_{\max}$ is either $\infty$ or, if finite, is characterized by the blow-up condition
    \begin{displaymath}
        \lim_{t \rightarrow T_{\max}^{-}}~\left[ \left\Vert u(t) \right\Vert_{H^{s}}+\left\Vert w(t)\right\Vert_{H^{s-{\rho\over 2}}}\right]=\infty.
    \end{displaymath}
\end{theorem}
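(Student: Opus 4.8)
The plan is to recast the system \eqref{system1}--\eqref{system3} as an abstract semilinear evolution equation and apply the standard semigroup-plus-contraction-mapping argument. Writing $U=(u,w)$ and $U_0=(u_0,w_0)$, the system reads $U_t=\mathcal{A}U+\mathcal{N}(U)$, $U(0)=U_0$, where $\mathcal{A}$ is the linear operator $(u,w)\mapsto(w_x,\,Lu_x)$ and $\mathcal{N}(u,w)=\bigl(0,\,B(g(u))_x\bigr)$ carries the nonlinearity.

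\emph{Step 1: the linear group.} I would first show that $\mathcal{A}$ generates a strongly continuous group $\{e^{t\mathcal{A}}\}_{t\in\mathbb{R}}$ on the product space $X^s:=H^s(\mathbb{R})\times H^{s-\rho/2}(\mathbb{R})$. After Fourier transform, $\mathcal{A}$ becomes multiplication by a $2\times2$ matrix symbol with eigenvalues $\pm i\xi\sqrt{l(\xi)}$, which are purely imaginary; diagonalizing it (equivalently, symmetrizing via the square root $L^{1/2}$) furnishes a norm on $X^s$ under which $e^{t\mathcal{A}}$ acts as an isometry. The ellipticity and coercivity bounds \eqref{bn-a} for $l(\xi)$ ensure that this energy-type norm is equivalent to the standard norm of $X^s$; in particular $\sup_{|t|\le T}\|e^{t\mathcal{A}}\|_{X^s\to X^s}<\infty$ for every $T>0$.

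\emph{Step 2: the nonlinear term.} Next I would verify that $\mathcal{N}:X^s\to X^s$ is Lipschitz on bounded sets. Since $s>\tfrac12$ and $g(0)=0$, the Nemytskii operator $u\mapsto g(u)$ maps $H^s(\mathbb{R})$ into itself and is Lipschitz on bounded subsets (here $H^s$ is a Banach algebra and $g\in C^{[s]+1}$). The operator $B\partial_x$ has order $1-r$, so it maps $H^s$ continuously into $H^{s-1+r}$, and the hypothesis $r+\tfrac\rho2\ge1$ yields the embedding $H^{s-1+r}\hookrightarrow H^{s-\rho/2}$. Composing, $\mathcal{N}$ sends $X^s$ into $X^s$ and inherits the local Lipschitz property.

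\emph{Step 3: fixed point and continuation.} With Steps 1 and 2 in hand, a contraction-mapping argument applied to the Duhamel formulation
\begin{equation*}
    U(t)=e^{t\mathcal{A}}U_0+\int_0^t e^{(t-\tau)\mathcal{A}}\,\mathcal{N}\bigl(U(\tau)\bigr)\,d\tau
\end{equation*}
on $C([0,T],X^s)$ produces, for $T=T(\|U_0\|_{X^s})>0$ small enough, a unique solution; uniqueness on the common interval and continuous dependence on the data follow from Gronwall's inequality. One then extends the solution to a maximal interval $[0,T_{\max})$. If $T_{\max}<\infty$ and the blow-up quantity $\|u(t)\|_{H^s}+\|w(t)\|_{H^{s-\rho/2}}$ stayed bounded as $t\to T_{\max}^-$, the local existence time in the fixed-point step could be taken uniformly positive along a sequence $t_n\to T_{\max}^-$, allowing the solution to be continued past $T_{\max}$ --- a contradiction; hence the stated blow-up alternative holds. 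I expect Step 1 to be the main obstacle, since it is where the structural assumptions \eqref{bn-a}--\eqref{bn-b} on the symbols are genuinely used; Steps 2 and 3 are routine. (This is, in effect, the local theory of \cite{babaoglu} rewritten in the first-order system variables $(u,w)$.)
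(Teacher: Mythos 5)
Your argument is correct and is essentially the route the paper takes: the paper does not reprove local well-posedness but simply rephrases the local existence theorem of \cite{babaoglu} --- itself established by exactly this semigroup-plus-contraction scheme --- in the system variables $(u,w)$, and then deduces the blow-up alternative from the continuation principle, just as in your Step 3. The only small caveat is that your Step 2 invokes $g(0)=0$ for the Nemytskii mapping on $H^{s}(\mathbb{R})$, which indeed holds for the nonlinearity $g(u)=-|u|^{p-1}u$ considered here.
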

The first statement of Theorem \ref{theo2.1}, namely the local existence result, is indeed rephrasing the above-mentioned result in \cite{babaoglu} in terms of the pair  $(u, w)$. It follows from this local existence result that the solution can be continued beyond $t$ whenever $(u(t), w(t)) \in H^{s}(\mathbb{R})\times H^{s-{\rho\over 2}}(\mathbb{R})$. This in turn gives rise to the second statement about the blow-up condition in finite time.

Recall that Theorem \ref{theo1.1} holds when $r+{\rho\over 2} \ge 1$, ${r\over 2}+{\rho\over 2}>{1\over 2}$, and $s>{1\over 2}$. The important point to note here is that in \cite{babaoglu}, Theorem \ref{theo1.1} was proved in two steps by first considering  the special case $s=s_{0}={r\over 2}+{\rho \over 2}$ and then extending the proof to the general case $s>{1\over 2}$. Thus from now on, without loss of generality, we shall confine our attention to the case $s=s_{0}={r\over 2}+{\rho \over 2}$ and, unifying the second and third conditions above, we shall  assume that $r+{\rho\over 2} \ge 1$ and ${r\over 2}+{\rho\over 2}>{1\over 2}$ (i.e. $s=s_{0}>{1\over 2}$). Here we also note that, since $\rho$ and $r$ are nonnegative, the former inequality implies the latter one except for the case  $(\rho, r)=(0, 1)$. So, in what follows, we make the assumptions
\begin{equation}
    r+{\rho\over 2} \ge 1, ~~~~\rho, r\ge 0, ~~~~(\rho, r)\neq(0,1)  \label{rho-r}
\end{equation}
to simplify the exposition.

We now sketch briefly two conservation laws  which will be important to our analysis.   The laws of conservation of energy and momentum for the system (\ref{system1})-(\ref{system3}) are given by
\begin{eqnarray}
    \mathcal{E}(u(t), w(t))&=&\frac{1}{2} \left\Vert B^{-1/2} w(t)\right\Vert^2_{L^{2}}
                    +\frac{1}{2} \left\Vert B^{-1/2}L^{1/2}u(t)\right\Vert^2_{L^{2}}
                    -\frac{1}{p+1}\left\Vert u(t)\right\Vert^{p+1}_{L^{p+1}}  dx \nonumber \\
                &=&\mathcal{E}(u_{0}, w_{0})  \label{energy}\\
    \mathcal{M}(u(t), w(t))&=&  \int_\mathbb{R}\left( B^{-1/2} w(t)\right) \left(B^{-1/2} u(t)\right) dx=\mathcal{M}(u_{0}, w_{0}), \label{momentum}
\end{eqnarray}
respectively. Note that the space $H^{s_{0}}(\mathbb{R})\times H^{s_{0}-{\rho\over 2}}(\mathbb{R})\equiv H^{{r\over 2}+{\rho\over 2}}(\mathbb{R})\times H^{{r\over 2}}(\mathbb{R})$ is the natural energy space for the pair $(u,w)$. Moreover, since $|\widehat{ (\Lambda^{-1}u_{t})}(\xi)|=|\xi |^{-1}|\widehat{w_{x}}(\xi)|=|\widehat{w}(\xi)|$ where we have used (\ref{system1}), the energy above is the same as  that appearing in (\ref{dd-energy}) for the power nonlinearities. So we refer the reader to Theorem 6.2 of \cite{babaoglu} for a proof of the conservation of energy. To prove the conservation of momentum, we multiply  (\ref{system1}) by $w$ and (\ref{system2}) by $u$ and add the resulting equations. This gives
\begin{displaymath}
    {d\over {dt}}B^{-1}(uw)= B^{-1} (ww_{x})+B^{-1} (uLu_{x})+ug(u)_{x},
\end{displaymath}
from which, by integrating with respect to $x$, we get  ${d\over {dt}}\mathcal{M}(u(t), w(t))=0$.

We close this preliminary section with the following remark. Recall that one of the assumptions of Theorem \ref{theo2.1} is that $g\in C^{[s]+1}(\mathbb{R})$. On the other hand, for the particular case  $g(u)=- |u|^{p-1}u$ considered in this study, we have $g\in C^{\infty}(\mathbb{R})$ when $p$ is an odd integer, $g\in C^{p-1}(\mathbb{R})$ when $p$ is an even integer and $g\in C^{[p]}(\mathbb{R})$   otherwise. Thus, when applied to our particular case, the condition in Theorem \ref{theo2.1} imposes the restriction
\begin{eqnarray*}
  &&  {r\over 2}+{\rho\over 2} \le p-2 ~~~\mbox{if $p$ is an even integer}, \\
  &&  {r\over 2}+{\rho\over 2} \le [p]-1 ~~~\mbox{if $p$ is not an  integer}
\end{eqnarray*}
on $p$.

\setcounter{equation}{0}
\section{Threshold for Global Existence versus Blow-up of Solutions}
\noindent

In this section we show how the potential well method can be used to establish a threshold for global existence versus blow-up of solutions of the Cauchy problem (\ref{system1})-(\ref{system3}). To this end we start by defining two functionals associated with the linear (dispersive) and  nonlinear parts of the potential energy:
\begin{eqnarray}
    &&\mathcal{I}(u )=\frac{1}{2}\int_{\mathbb{R}}(B^{-1/2}L^{1/2}u)^{2}dx,  \label{ic0} \\
    &&\mathcal{Q}(u )=\int_{\mathbb{R}}|u|^{p+1}dx,  \label{momentum-t}
\end{eqnarray}
for  $u\in H^{s_{0}}(\mathbb{R})$. Note that $\mathcal{Q}(u )<\infty$ since  $s_{0}={r\over 2}+{\rho\over 2}>{1\over 2}$ by (\ref{rho-r}).  Now we consider the following constrained variational problem
\begin{equation}
    m= \inf\left\{\mathcal{I}(u ): u\in H^{s_{0}}(\mathbb{R}),~\mathcal{Q}(u)=1\right\}.  \label{var}
\end{equation}
Using the lower and upper estimates given by (\ref{bn-a}) and (\ref{bn-b}) for the symbols $l(\xi)$ and $b(\xi)$ in (\ref{var}) and then using the Sobolev embedding theorem \cite{adams} yields
\begin{eqnarray*}
    \mathcal{I}(u ) \geq {c_{1}^{2}\over {2c_{4}^{2}}} \left\Vert u\right\Vert^2_{H^{s_{0}}}
                    \geq C {c_{1}^{2}\over {2c_{4}^{2}}}\left\Vert u\right\Vert^2_{L^{p+1}}
                    = C {c_{1}^{2}\over {2c_{4}^{2}}} >0
\end{eqnarray*}
from which we deduce that $m>0$. Using the homogeneity of the nonlinear term, the variational problem  (\ref{var}) can also be expressed as follows
\begin{equation}
    \inf \left\{\frac{(\mathcal{I}(u ))^{\frac{p+1}{2}}}{\mathcal{Q}(u)}:  u\in H^{s_{0}}(\mathbb{R}), ~u\neq 0 \right\}=m^{\frac{p+1}{2}}.  \label{variational}
\end{equation}
Note that  $\left(\mathcal{I}(u )\right)^{\frac{1}{2}}$ defines an equivalent norm on $H^{s_{0}}(\mathbb{R})$. In that respect, $m^{-1/2}$  is the best constant for the Sobolev embedding of $H^{s_{0}}(\mathbb{R})$ (endowed with that norm) into $L^{p+1}(\mathbb{R})$.

We now introduce a critical energy constant $d$ to be determined by solving the constrained variational problem:
\begin{equation}
    d=\inf \left\{\mathcal{E}(u,w):~(u,w)\in H^{s_{0}}(\mathbb{R})\times H^{s_{0}-{\rho \over 2}}(\mathbb{R}), ~u \neq 0, ~2\mathcal{I}(u)-\mathcal{Q}(u)=0 \right\}.  \label{critical-energy}
\end{equation}
The constant $d$ so obtained can be viewed as the minimum of the total energy (kinetic plus potential)  for a given potential energy level, and is called the "depth of the potential well" in \cite{payne}. Since $u$ and $w$ are independent in the set over which the infimum is taken, and the constraint is only in $u$, observing that $\mathcal{E}(u, w) \geq \mathcal{E}(u, 0)$ we have
\begin{displaymath}
    d=\inf \left\{\mathcal{V}(u):~u\in H^{s_{0}}(\mathbb{R}), ~u \neq 0, ~2\mathcal{I}(u)-\mathcal{Q}(u)=0 \right\},
\end{displaymath}
where $\mathcal{V}(u)\equiv \mathcal{E}(u, 0)$ denotes the potential energy.  For the static case $u(x, t)=\varphi(x)$, $w(x,t)\equiv 0$, where $\varphi(x)$ represents a stationary solution, the energy is of the form
\begin{equation}
    \mathcal{E}(\varphi,0) = \mathcal{V}(\varphi)= \frac{1}{2}\left\Vert B^{-1/2}L^{1/2}\varphi \right\Vert_{L^2}^2- \frac{1}{p+1}\left\Vert \varphi\right\Vert_{L^{p+1}}^{p+1}
            = \mathcal{I}(\varphi)-\frac{1}{p+1}\mathcal{Q}(\varphi). \label{static-energy}
\end{equation}
Note that for the static case  (\ref{system1})-(\ref{system2}) reduces to  the static equation $B^{-1}L\varphi-|\varphi|^{p-1}\varphi=0$ which is the Euler-Lagrange equation of (\ref{static-energy}).  Integration of this equation in $x$ yields  $2\mathcal{I}(\varphi)-\mathcal{Q}(\varphi)=0$. The important point to note here is that $d=\inf_{\varphi} \mathcal{V}(\varphi)$. Thus, the variational problem (\ref{critical-energy}) can be viewed as an extension  of the one for the static problem.

The following lemma gives the value of $d$ in terms of the best constant for the Sobolev embedding of $H^{s_{0}}(\mathbb{R})$ into $L^{p+1}(\mathbb{R})$.
\begin{lemma}\label{lem3.1}
    $d= \left(\frac{p-1}{p+1}\right) 2^{(\frac{2}{p-1})} m^{(\frac{p+1}{p-1})}$.
\end{lemma}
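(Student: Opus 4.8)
The plan is to reduce the constrained minimization defining $d$ to the scale‑invariant quotient (\ref{variational}) by exploiting homogeneity, so that no compactness or existence of a minimizer is needed. I would start from the reduction already recorded just before the lemma, namely
\[
 d=\inf\left\{\mathcal{V}(u):~u\in H^{s_{0}}(\mathbb{R}),~u\neq0,~2\mathcal{I}(u)-\mathcal{Q}(u)=0\right\},\qquad \mathcal{V}(u)=\mathcal{I}(u)-\tfrac{1}{p+1}\mathcal{Q}(u).
\]
On the constraint set $\mathcal{Q}(u)=2\mathcal{I}(u)$, hence $\mathcal{V}(u)=\bigl(1-\tfrac{2}{p+1}\bigr)\mathcal{I}(u)=\tfrac{p-1}{p+1}\mathcal{I}(u)$, and therefore $d=\tfrac{p-1}{p+1}\,\mu$ with $\mu:=\inf\{\mathcal{I}(u):u\neq0,\ 2\mathcal{I}(u)=\mathcal{Q}(u)\}$. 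It then suffices to prove $\mu=2^{2/(p-1)}m^{(p+1)/(p-1)}$.

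For the lower bound on $\mu$, I would take any admissible $u$ and apply (\ref{variational}) in the form $\mathcal{Q}(u)\le m^{-(p+1)/2}(\mathcal{I}(u))^{(p+1)/2}$. Inserting the constraint $\mathcal{Q}(u)=2\mathcal{I}(u)$ and dividing by $\mathcal{I}(u)>0$ (which is legitimate since $m>0$ forces $\mathcal{I}(u)>0$ on the constraint set) gives $2\le m^{-(p+1)/2}(\mathcal{I}(u))^{(p-1)/2}$, i.e. $(\mathcal{I}(u))^{(p-1)/2}\ge 2m^{(p+1)/2}$; raising to the power $2/(p-1)$ yields $\mathcal{I}(u)\ge 2^{2/(p-1)}m^{(p+1)/(p-1)}$, hence $\mu\ge 2^{2/(p-1)}m^{(p+1)/(p-1)}$.

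For the matching upper bound I would use the scaling $u\mapsto\lambda u$, under which $\mathcal{I}(\lambda u)=\lambda^{2}\mathcal{I}(u)$ and $\mathcal{Q}(\lambda u)=\lambda^{p+1}\mathcal{Q}(u)$. Given any $v\neq0$, the choice $\lambda=\bigl(2\mathcal{I}(v)/\mathcal{Q}(v)\bigr)^{1/(p-1)}>0$ places $u=\lambda v$ on the constraint set, and a direct computation (using $\tfrac{2}{p-1}+1=\tfrac{p+1}{p-1}$) gives
\[
 \mathcal{I}(\lambda v)=2^{2/(p-1)}\left(\frac{(\mathcal{I}(v))^{(p+1)/2}}{\mathcal{Q}(v)}\right)^{2/(p-1)}.
\]
Choosing $v=v_n$ to be a minimizing sequence for (\ref{variational}) and using that $t\mapsto t^{2/(p-1)}$ is continuous and increasing, the right-hand side converges to $2^{2/(p-1)}(m^{(p+1)/2})^{2/(p-1)}=2^{2/(p-1)}m^{(p+1)/(p-1)}$, so $\mu\le 2^{2/(p-1)}m^{(p+1)/(p-1)}$. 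Combining the two bounds gives $\mu=2^{2/(p-1)}m^{(p+1)/(p-1)}$ and hence $d=\bigl(\tfrac{p-1}{p+1}\bigr)2^{2/(p-1)}m^{(p+1)/(p-1)}$.

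I do not expect a genuine obstacle: the argument is entirely homogeneity bookkeeping, and the crucial algebraic identities are the reduction $\mathcal{V}=\tfrac{p-1}{p+1}\mathcal{I}$ on the constraint set and the scaling identity above. The one point that warrants care is the equivalence between the constrained infimum $\mu$ and the scale‑invariant infimum in (\ref{variational}) — one must verify both that rescaling along a minimizing sequence genuinely lands in the constraint set and that every element of the constraint set is such a rescaling, so that the two infima coincide rather than merely being comparable.
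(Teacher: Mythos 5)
Your proposal is correct and follows essentially the same route as the paper: the lower bound comes from the best-constant inequality (\ref{variational}) combined with the constraint $2\mathcal{I}(u)=\mathcal{Q}(u)$, and the upper bound from rescaling a minimizing sequence onto the constraint set via the homogeneity $\mathcal{I}(\lambda u)=\lambda^{2}\mathcal{I}(u)$, $\mathcal{Q}(\lambda u)=\lambda^{p+1}\mathcal{Q}(u)$. Factoring $d=\tfrac{p-1}{p+1}\mu$ first and using a minimizing sequence for the quotient rather than the $\mathcal{Q}=1$ normalized problem are only cosmetic differences from the paper's argument.
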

\begin{proof}
    Let $u\in H^{s_{0}}(\mathbb{R})$ with $2\mathcal{I}(u)=\mathcal{Q}(u)$. By (\ref{variational}) it is obvious that
    \begin{displaymath}
        2m^{(\frac{p+1}{2})}\mathcal{I}(u)=m^{(\frac{p+1}{2})}\mathcal{Q}(u)\leq (\mathcal{I}(u))^{(\frac{p+1}{2})}
    \end{displaymath}
    and so
    \begin{displaymath}
        \mathcal{I}(u)\geq 2^{(\frac{2}{p-1})} m^{(\frac{p+1}{p-1})}.
    \end{displaymath}
    We thus get
    \begin{displaymath}
       \mathcal{V}(u)  = \mathcal{I}(u)-\frac{1}{p+1}\mathcal{Q}(u)
                =\left(1-\frac{2}{p+1}\right)\mathcal{I}(u) \geq \left(\frac{p-1}{p+1}\right) 2^{(\frac{2}{p-1})} m^{(\frac{p+1}{p-1})}.
    \end{displaymath}
    This gives the lower bound for $d$
    \begin{equation}
        d\geq \left(\frac{p-1}{p+1}\right) 2^{(\frac{2}{p-1})} m^{(\frac{p+1}{p-1})}. \label{lower}
    \end{equation}
    Conversely, let $u_{n}$ be a minimizing sequence satisfying $\mathcal{Q}(u_{n})=1$ for the problem (\ref{var}), that is, $\lim_{n\rightarrow \infty} \mathcal{I}(u_{n})=m$. Replacing $u_{n}$ by $\lambda_{n}u_{n}$ we get
    \begin{equation}
        \lim_{n\rightarrow \infty} \mathcal{I}(\lambda_{n}u_{n})= \lambda_n^2 m,~~~~\mathcal{Q}(\lambda_n u_n)=\lambda_n^{p+1}.
    \end{equation}
    Choosing $\lambda_{n}$ so that  $2\mathcal{I}(\lambda_{n}u_{n})-\mathcal{Q}(\lambda_{n}u_{n})=0$ leads to $\lambda_{n}=\left(2\mathcal{I}(u_n)\right)^{\frac{1}{p-1}}$. On the other hand, from (\ref{static-energy}) we have
    \begin{displaymath}
        \mathcal{V}(\lambda_n u_n) = \lambda_{n}^{2}\mathcal{I}(u_n)-\frac{\lambda_n^{p+1}}{p+1}\mathcal{Q}(u_n)
    ~                       = \lambda_{n}^{2}\left(\frac{p-1}{p+1}\right)\mathcal{I}(u_n),
    \end{displaymath}
    and consequently
    \begin{displaymath}
        \lim_{n\rightarrow \infty} \mathcal{V}(\lambda_{n}u_{n})
        =\left(\frac{p-1}{p+1}\right) 2^{(\frac{2}{p-1})} m^{(\frac{p+1}{p-1})}.
    \end{displaymath}
    This gives the upper bound for $d$
    \begin{equation}
        d\leq \left(\frac{p-1}{p+1}\right) 2^{(\frac{2}{p-1})} m^{(\frac{p+1}{p-1})}. \label{upper}
    \end{equation}
    Combining (\ref{lower}) and (\ref{upper}) completes the proof.
\end{proof}
 Together with the critical energy constant $d$, the condition $2\mathcal{I}(u)=\mathcal{Q}(u)$ based on the functionals  $\mathcal{I}(u)$ and $\mathcal{Q}(u)$ related to the dispersive and nonlinear parts of the potential energy determines a balance between the dispersive effect of the operator $B^{-1}L$  and the nonlinear effect. This balance plays a key role in determining the nature of global existence versus blow-up dichotomy for (\ref{nonlocal}).  In that respect we define two sets $\Sigma_{+}$ and $\Sigma_{-}$ as
\begin{eqnarray}
    && \!\!\!\!\!\!\!\!\! \Sigma_{+} =\{ (u,w)\in H^{s_{0}}(\mathbb{R})\times H^{s_{0}-{\rho \over 2}}(\mathbb{R}):\quad \mathcal{E}(u,w) < d, \quad 2 \mathcal{I}(u)-\mathcal{Q}(u) \geq 0 \}, \label{inv1}\\
    && \!\!\!\!\!\!\!\!\! \Sigma_{-} =\{ (u,w)\in H^{s_{0}}(\mathbb{R})\times H^{s_{0}-{\rho \over 2}}(\mathbb{R}):\quad \mathcal{E}(u,w) < d, \quad 2 \mathcal{I}(u)-\mathcal{Q}(u) <0 \}. \label{inv2}
\end{eqnarray}
We note that if $u\neq 0$ and $ 2 \mathcal{I}(u)-\mathcal{Q}(u)=0$ by (\ref{critical-energy}) we have $\mathcal{E}(u,w)\geq d$, hence $(u, w)$ is not in $\Sigma_{+}$. This shows that $\Sigma_{+}$ could be defined alternatively as
 \begin{displaymath}
       \Sigma_{+} =\{ (u,w):\quad \mathcal{E}(u,w) < d, \quad 2 \mathcal{I}(u)-\mathcal{Q}(u) > 0 \}\cup \{ (0,w):\quad \mathcal{E}(0,w) < d \}.
    \end{displaymath}
 Also note that $\Sigma_{+}\cup \Sigma_{-}= \{ (u,w)\in H^{s_{0}}(\mathbb{R})\times H^{s_{0}-{\rho \over 2}}(\mathbb{R}):\quad \mathcal{E}(u,w) < d \}  $.
  Clearly, the crucial fact is that  for both of the sets the energy is subcritical, that is, $\mathcal{E}(u,w)$ is strictly below the critical energy constant $d$. Moreover, due to the second inequality in (\ref{inv2}), $u\neq 0$ for $\Sigma_{-}$. Under this framework, the formula  $(u, w)\in  \Sigma_{+}$ just amounts to saying that   the dispersive effect  dominates over the nonlinear effect. Similar to that,  $(u, w)\in  \Sigma_{-}$ is another way of stating that the nonlinear effect dominates over  the dispersive effect.

The following lemma  shows that the sets $\Sigma_{+}$ and $\Sigma_{-}$ are invariant under the flow generated by (\ref{system1})-(\ref{system2}).
\begin{lemma}\label{lem3.2}
    Suppose $(u_{0},w_{0}) \in \Sigma_{\pm }$, and let $(u(t),w(t))$ be the solution of the Cauchy problem  (\ref{system1})-(\ref{system3}) with initial value $(u_{0},w_{0})$. Then $(u(t),w(t)) \in \Sigma _{\pm }$  for $0<t<T_{\max }$.
\end{lemma}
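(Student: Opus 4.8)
The plan is to use a continuity/connectedness argument based on the conservation of energy together with the observation that the constraint hypersurface $\{2\mathcal{I}(u)-\mathcal{Q}(u)=0,\ u\neq 0\}$ cannot be crossed by a subcritical-energy trajectory. First I would treat $\Sigma_{+}$. Let $(u_0,w_0)\in\Sigma_{+}$ and let $(u(t),w(t))$ be the local solution on $[0,T_{\max})$ given by Theorem \ref{theo2.1}. By the conservation of energy \eqref{energy}, $\mathcal{E}(u(t),w(t))=\mathcal{E}(u_0,w_0)<d$ for all $t\in[0,T_{\max})$, so the first defining inequality of $\Sigma_{\pm}$ holds for all $t$; only the sign of $2\mathcal{I}(u(t))-\mathcal{Q}(u(t))$ is in question. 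Suppose, for contradiction, that there is a first time $t_0\in(0,T_{\max})$ at which $2\mathcal{I}(u(t_0))-\mathcal{Q}(u(t_0))<0$. Since $t\mapsto u(t)\in H^{s_0}(\mathbb{R})$ is continuous and $u\mapsto 2\mathcal{I}(u)-\mathcal{Q}(u)$ is continuous on $H^{s_0}(\mathbb{R})$ (using $s_0>1/2$ and the Sobolev embedding $H^{s_0}\hookrightarrow L^{p+1}$ to control $\mathcal{Q}$), there must be some $t_1\in[0,t_0)$ with $2\mathcal{I}(u(t_1))-\mathcal{Q}(u(t_1))=0$.

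The key step is then to rule out $u(t_1)=0$ and invoke the definition of $d$. If $u(t_1)\neq0$, then $u(t_1)$ satisfies the constraint in \eqref{critical-energy}, whence $\mathcal{E}(u(t_1),w(t_1))\geq d$, contradicting $\mathcal{E}(u(t_1),w(t_1))<d$. To eliminate the possibility $u(t_1)=0$, I would argue that if $u(t_1)=0$ then $\mathcal{I}(u(t_1))=0$ and $\mathcal{Q}(u(t_1))=0$, and moreover for $t$ slightly less than $t_1$ one has $2\mathcal{I}(u(t))-\mathcal{Q}(u(t))\geq 0$ with $u(t)$ possibly nonzero; but the cleaner route is to use the best-constant inequality. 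Indeed, whenever $u\neq0$ and $2\mathcal{I}(u)-\mathcal{Q}(u)<0$, i.e. $\mathcal{Q}(u)>2\mathcal{I}(u)$, combining with \eqref{variational} (which gives $\mathcal{Q}(u)\le m^{-(p+1)/2}(\mathcal{I}(u))^{(p+1)/2}$) forces $\mathcal{I}(u)>2^{2/(p-1)}m^{(p+1)/(p-1)}$, so $\mathcal{I}(u)$ is bounded below by a fixed positive constant on the region $\{2\mathcal{I}-\mathcal{Q}<0\}$. Hence as $t\uparrow t_0$ (or at $t_0$ itself) $\mathcal{I}(u(t_0))$ is at least that positive constant, so in particular $u(t_0)\neq0$; running the intermediate value argument on $[0,t_0]$ for the continuous scalar function $\phi(t):=2\mathcal{I}(u(t))-\mathcal{Q}(u(t))$ produces $t_1$ with $\phi(t_1)=0$, and by continuity of $\mathcal{I}$ together with the uniform lower bound just described, $\mathcal{I}(u(t))$ stays bounded away from $0$ on a neighborhood of $t_1$, so $u(t_1)\neq0$ — giving the contradiction with the definition of $d$ as above. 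This establishes invariance of $\Sigma_{+}$.

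For $\Sigma_{-}$ the argument is symmetric: if $(u_0,w_0)\in\Sigma_{-}$, then $u_0\neq0$ and $\phi(0)<0$, so by the uniform lower bound $\mathcal{I}(u_0)>2^{2/(p-1)}m^{(p+1)/(p-1)}$. If the trajectory ever left $\Sigma_{-}$, then since $\mathcal{E}<d$ persists, we would need $\phi(t_0)\geq0$ at some first such time $t_0$, hence $\phi(t_1)=0$ for some $t_1\in(0,t_0]$ by continuity; at that $t_1$ either $u(t_1)=0$ or $\mathcal{E}(u(t_1),w(t_1))\ge d$. The latter contradicts energy conservation. For the former, I would again use that on $[0,t_1)$ we have $\phi(t)<0$, hence $\mathcal{I}(u(t))>2^{2/(p-1)}m^{(p+1)/(p-1)}$ there, and by continuity of $\mathcal{I}$ in $t$, $\mathcal{I}(u(t_1))\ge 2^{2/(p-1)}m^{(p+1)/(p-1)}>0$, so $u(t_1)\neq0$ — contradiction. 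Therefore $\phi(t)<0$ for all $t\in[0,T_{\max})$ and $\Sigma_{-}$ is invariant.

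I expect the main obstacle to be the careful handling of the possibility $u(t_1)=0$ at the putative crossing time, since the constraint set in \eqref{critical-energy} explicitly excludes the origin; the resolution above is to exploit the quantitative gap furnished by \eqref{variational}, namely that $\mathcal{I}$ is bounded below by a strictly positive constant on the whole region $\{u:2\mathcal{I}(u)-\mathcal{Q}(u)<0\}$, which both prevents $u$ from vanishing as the boundary is approached and supplies exactly the energy lower bound $d$ on the boundary. The remaining ingredients — continuity in $t$ of $u(t)$ in $H^{s_0}$ (from Theorem \ref{theo2.1}), continuity of the functionals $\mathcal{I},\mathcal{Q}$ on $H^{s_0}$, and conservation of $\mathcal{E}$ — are routine, and the intermediate value theorem does the rest.
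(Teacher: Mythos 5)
Your strategy is essentially the paper's: energy conservation keeps $\mathcal{E}(u(t),w(t))=\mathcal{E}(u_0,w_0)<d$ along the flow, the best-constant inequality (\ref{variational}) gives the uniform bound $\mathcal{I}(u)>2^{2/(p-1)}m^{(p+1)/(p-1)}=\tfrac{p+1}{p-1}d$ on the region where $2\mathcal{I}(u)-\mathcal{Q}(u)<0$, and this quantitative gap is what rules out $u=0$ at a crossing time so that the definition (\ref{critical-energy}) of $d$ yields the contradiction $\mathcal{E}\geq d$. Your treatment of $\Sigma_{-}$ is correct and coincides with the paper's argument.

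In the $\Sigma_{+}$ half, however, the step ``$\mathcal{I}(u(t))$ stays bounded away from $0$ on a neighborhood of $t_1$, so $u(t_1)\neq0$'' is not justified for a $t_1$ produced by a bare intermediate value argument. Writing $\phi(t)=2\mathcal{I}(u(t))-\mathcal{Q}(u(t))$ as you do, the lower bound on $\mathcal{I}$ is available only at times where $\phi(t)<0$, and an arbitrary zero $t_1\in[0,t_0)$ of $\phi$ need not be approached by such times: $\phi$ could vanish at $t_1$ with $u(t_1)=0$ (so $\mathcal{I}(u(t_1))=\mathcal{Q}(u(t_1))=0$) while remaining nonnegative throughout a neighborhood of $t_1$ and dipping below zero only later, near $t_0$; at such a $t_1$ no contradiction arises. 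The repair is standard and is exactly what the paper does by approaching the crossing from the $\Sigma_{-}$ side: take any $t_0$ with $\phi(t_0)<0$ (note a ``first'' such time need not exist, since $\{\phi<0\}$ is open) and set $t_1=\sup\{t\in[0,t_0]:\phi(t)\geq 0\}$, so that $\phi<0$ on $(t_1,t_0]$; then $\mathcal{I}(u(t_1))=\lim_{t\to t_1^{+}}\mathcal{I}(u(t))\geq \tfrac{p+1}{p-1}d>0$, hence $u(t_1)\neq 0$, while continuity from both sides gives $\phi(t_1)=0$, and (\ref{critical-energy}) forces $\mathcal{E}(u(t_1),w(t_1))\geq d$, the desired contradiction. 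With this choice of the crossing time your argument is complete and reproduces the paper's proof.
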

\begin{proof}
    Let $(u_{0},w_{0}) \in \Sigma_{-}$. Since the energy is conserved, $\mathcal{E}(u(t),w(t))=\mathcal{E}(u_{0},w_{0}) < d$. If  $(u(t),w(t))$  is not in $\Sigma_{-}$, then there is some $t_0>0$ such that for $t\in [0, t_{0})$, $(u(t), w(t))\in \Sigma_{-}$ and   $2\mathcal{I}(u(t_0))=\mathcal{Q}(u(t_0))$. For $t\in [0, t_{0})$, since $2 \mathcal{I}(u)-\mathcal{Q}(u) <0$ and consequently
    \begin{equation*}
        2m^{(\frac{p+1}{2})}\mathcal{I}(u(t)) < m^{(\frac{p+1}{2})}\mathcal{Q}(u(t))
                                            \leq \mathcal{I}(u(t))^{(\frac{p+1}{2})},
    \end{equation*}
    we have
    \begin{equation*}
       \left( \frac{p+1}{p-1}\right)d=2^{(\frac{2}{p-1})}m^{(\frac{p+1}{p-1})}<\mathcal{I}(u(t)).
    \end{equation*}
    By continuity we then have
    \begin{displaymath}
       \mathcal{I}(u(t_{0}))=\lim_{t\rightarrow t_{0}^{-}} \mathcal{I}(u(t)) \geq \left( \frac{p+1}{p-1}\right)d >0,
    \end{displaymath}
    In other words, $u(t_{0})\neq 0$. By (\ref{critical-energy}) we get  $\mathcal{E}(u(t_{0}),w(t_{0})) \geq d$. This is a contradiction, so $(u(t),w(t))\in \Sigma_{-}$. The same argument also works for $\Sigma_{+}$.
    Assume that $(u_0,v_0)\in \Sigma_+$, but $(u(t),w(t))$ does not stay in $\Sigma_+$. Since $\mathcal{E}(u(t),w(t))< d$ there will be some  $t_0$  and $\epsilon>0$ such that $(u(t),w(t))\in \Sigma_+$ for $t\in [0,t_0)$, but $(u(t),w(t))\in \Sigma_-$ for $t\in(t_0,t_0+\epsilon)$.     Then as above,
    $\mathcal{I}(u(t_{0}))=\lim_{t\rightarrow t_{0}^{+}} \mathcal{I}(u(t)) \geq \left( \frac{p+1}{p-1} \right )d >0 $ which implies that $u(t_0) \neq 0$.     Clearly,     $2\mathcal{I}(u(t_0))=\mathcal{Q}(u(t_0))$, which implies the contradiction  $\mathcal{E}(u(t_{0}),w(t_{0})) \geq d$.
\end{proof}

The next two theorems show that the sets $\Sigma_+$ and $\Sigma_-$ determine the life-span of solutions to the Cauchy problem (\ref{system1})-(\ref{system3}) if $\mathcal{E}(u_{0}, w_{0}) < d$. This allows us to split initial data $(u_{0}, w_{0})$ with subcritical energy into two classes. We begin with a global existence result.
\begin{theorem}\label{theo3.3}
     Let $(u_{0},w_{0}) \in \Sigma_{+}$. Then the solution $(u(t),w(t))$ of the Cauchy problem  (\ref{system1})-(\ref{system3}) with initial data $(u_{0},w_{0})$ is global.
\end{theorem}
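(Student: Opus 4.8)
The plan is to deduce global existence from the blow-up alternative in Theorem~\ref{theo2.1}: it suffices to show that $\Vert u(t)\Vert_{H^{s_0}}+\Vert w(t)\Vert_{H^{s_0-\rho/2}}$ stays bounded on the whole interval $[0,T_{\max})$, since then $T_{\max}$ cannot be finite. Because $(u_0,w_0)\in\Sigma_+$, Lemma~\ref{lem3.2} guarantees $(u(t),w(t))\in\Sigma_+$ for all $t\in[0,T_{\max})$, so by conservation of energy $\mathcal{E}(u(t),w(t))=\mathcal{E}(u_0,w_0)<d$ and $2\mathcal{I}(u(t))-\mathcal{Q}(u(t))\ge 0$ throughout.

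The first step is to extract a priori bounds from the energy. Writing $\mathcal{E}(u,w)=\tfrac12\Vert B^{-1/2}w\Vert_{L^2}^2+\mathcal{I}(u)-\tfrac1{p+1}\mathcal{Q}(u)$ and using the $\Sigma_+$-constraint $\mathcal{Q}(u(t))\le 2\mathcal{I}(u(t))$, I would bound the potential part from below by $\mathcal{I}(u)-\tfrac1{p+1}\mathcal{Q}(u)\ge\bigl(1-\tfrac2{p+1}\bigr)\mathcal{I}(u)=\tfrac{p-1}{p+1}\mathcal{I}(u)\ge 0$. This gives
\[
\tfrac12\Vert B^{-1/2}w(t)\Vert_{L^2}^2+\tfrac{p-1}{p+1}\,\mathcal{I}(u(t))\ \le\ \mathcal{E}(u_0,w_0)
\]
for every $t\in[0,T_{\max})$, so both $\Vert B^{-1/2}w(t)\Vert_{L^2}$ and $\mathcal{I}(u(t))$ are bounded by a constant depending only on $\mathcal{E}(u_0,w_0)$ and $p$.

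The second step is to convert these into Sobolev bounds by means of the symbol estimates (\ref{bn-a})--(\ref{bn-b}). On the Fourier side $l(\xi)/b(\xi)\ge(c_1/c_4)^2(1+\xi^2)^{(\rho+r)/2}$ and $1/b(\xi)\ge c_4^{-2}(1+\xi^2)^{r/2}$; hence, recalling $s_0=\tfrac{\rho+r}{2}$ and $s_0-\tfrac\rho2=\tfrac r2$, one gets $\mathcal{I}(u)\ge\tfrac{c_1^2}{2c_4^2}\Vert u\Vert_{H^{s_0}}^2$ and $\Vert B^{-1/2}w\Vert_{L^2}^2\ge c_4^{-2}\Vert w\Vert_{H^{s_0-\rho/2}}^2$. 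Combining with the previous inequality, $\Vert u(t)\Vert_{H^{s_0}}$ and $\Vert w(t)\Vert_{H^{s_0-\rho/2}}$ are bounded uniformly on $[0,T_{\max})$, and the blow-up alternative then forces $T_{\max}=\infty$.

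I do not expect a genuine obstacle: the substance of the argument is already packaged into the invariance statement of Lemma~\ref{lem3.2}, and the rest is the standard potential-well bookkeeping above. The only points deserving a word of care are that $u(t)=0$ is allowed inside $\Sigma_+$ (harmless, since the bound on $\mathcal{I}(u(t))$ is then vacuous) and that one works at the regularity level $s=s_0$, which is legitimate by the reduction discussed after Theorem~\ref{theo2.1}.
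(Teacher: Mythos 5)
Your proposal is correct and follows essentially the same route as the paper's proof: invariance of $\Sigma_+$ via Lemma \ref{lem3.2}, energy conservation together with $\mathcal{Q}(u)\le 2\mathcal{I}(u)$ to bound $\tfrac12\Vert B^{-1/2}w(t)\Vert_{L^2}^2+\tfrac{p-1}{p+1}\mathcal{I}(u(t))$, then the coercivity estimates (\ref{bn-a})--(\ref{bn-b}) to control $\Vert u(t)\Vert_{H^{s_0}}$ and $\Vert w(t)\Vert_{H^{s_0-\rho/2}}$, and finally the continuation criterion of Theorem \ref{theo2.1}. No gaps; the paper bounds by $d$ where you bound by $\mathcal{E}(u_0,w_0)<d$, which is an immaterial difference.
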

\begin{proof}
    Let $(u(t),w(t))$ be defined for $t\in \left[0, T_{\max}\right)$. By Lemma \ref{lem3.2}   $(u(t),w(t))\in \Sigma_+$; namely $\mathcal{E}(u(t),w(t)) < d$ and $\mathcal{Q}(u(t)) \leq 2\mathcal{I}(u(t))$ for $t\in \left[0, T_{\max}\right)$. Using (\ref{energy}) in the first inequality yields
    \begin{eqnarray*}
      \mathcal{E}(u(t),w(t))=\frac{1}{2}\left\Vert B^{-1/2}w(t)\right\Vert_{L^2}^2 + \mathcal{I}(u(t)) -\frac{1}{p+1}\mathcal{Q}(u(t))
        < d.
    \end{eqnarray*}
    Combining this with the second inequality $\mathcal{Q}(u(t)) \leq 2\mathcal{I}(u(t))$ gives
    \begin{equation}
       \frac{1}{2}\left\Vert B^{-1/2}w(t)\right\Vert_{L^2}^2+\left(\frac{p-1}{p+1}\right)\mathcal{I}(u(t))  < d. \label{ara1}
    \end{equation}
    By the coercivity of  $L$ and $B$ we have
    \begin{equation}
        {c_{1}^{2}\over {2c_{4}^{2}}} \left\Vert u(t)\right\Vert^2_{H^{s_{0}}}\leq \mathcal{I}(u(t)), ~~~~~
        {1\over {2c_{4}^{2}}} \left\Vert w(t)\right\Vert^2_{H^{s_{0}-{\rho\over 2}}}\leq  \frac{1}{2}\left\Vert B^{-1/2}w(t)\right\Vert_{L^2}^2, \label{ara2}
    \end{equation}
    where we have used (\ref{bn-a}) and (\ref{bn-b}). Combining (\ref{ara1}) and (\ref{ara2}) yields
    \begin{equation*}
        {1\over {2c_{4}^{2}}} \left\Vert w(t)\right\Vert^2_{H^{s_{0}-{\rho\over 2}}}
            +\left(\frac{p-1}{p+1}\right){c_{1}^{2}\over {2c_{4}^{2}}}\Vert u(t)\Vert_{H^{s_{0}}}^2< d.
    \end{equation*}
    This means that $(u(t),w(t))$ stays bounded in $H^{s_{0}}(\mathbb{R})\times H^{s_{0}-{\rho \over 2}}(\mathbb{R})$, thus $T_{\max}=\infty$.
\end{proof}

We now derive a blow-up result using a variation of Levine's Lemma \cite{levine}.
\begin{lemma}\label{lem3.4}
    Suppose that $H(t)$, $t\geq 0$, is a positive, twice differentiable function satisfying  $H''H-(1+\nu )(H')^2\geq 0$ where $\nu >0$. If $H(t_0)>0$ and $H'(t_0)>0$ for some $t_{0}>0$, then $H(t)\rightarrow \infty $  as $t\rightarrow t_1$ for some $t_0<t_1\leq H(t_0)/\left(\nu H'(t_0)\right) $.
\end{lemma}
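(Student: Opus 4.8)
The plan is to reduce the nonlinear differential inequality to a concavity statement via the classical Levine substitution $y(t)=H(t)^{-\nu}$. This is legitimate and $C^2$ wherever $H$ is defined, $C^2$, and positive; let $[0,T)$ denote the maximal such interval (so the asserted blow-up time $t_1$ will be $T$). First I would compute
\[
 y'=-\nu H^{-\nu-1}H', \qquad y''=-\nu H^{-\nu-2}\bigl[\,H''H-(\nu+1)(H')^2\,\bigr].
\]
The bracket is precisely the quantity assumed nonnegative in the hypothesis, and since $H>0$ and $\nu>0$ this yields $y''\le 0$; that is, $y$ is concave on $[0,T)$, a fortiori on $[t_0,T)$.

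Next I would record the signs at $t_0$: since $H(t_0)>0$ and $H'(t_0)>0$ we have $y(t_0)=H(t_0)^{-\nu}>0$ and $y'(t_0)=-\nu H(t_0)^{-\nu-1}H'(t_0)<0$. Concavity then gives the affine majorant
\[
 0<y(t)\le y(t_0)+y'(t_0)(t-t_0)\qquad\text{for } t\in[t_0,T).
\]
The right-hand side is a strictly decreasing affine function of $t$ which vanishes at $t^{*}:=t_0-y(t_0)/y'(t_0)=t_0+H(t_0)/\bigl(\nu H'(t_0)\bigr)$. Hence $y$ cannot remain positive up to $t^{*}$, so $T\le t^{*}$ and $y(t)\to 0^{+}$ as $t\to T^{-}$, which is the claimed bound on $t_1$.

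Finally, to see that this genuinely forces $H(t)\to\infty$ rather than some degeneration of the substitution, I would use that concavity also gives $y'(t)\le y'(t_0)<0$ on $[t_0,T)$, i.e. $-\nu H^{-\nu-1}H'<0$, so $H'>0$ there; thus $H$ is increasing on $[t_0,T)$, stays $\ge H(t_0)>0$, and from $H(t)=y(t)^{-1/\nu}$ we conclude $H(t)\to+\infty$ as $y(t)\to 0^{+}$, i.e. as $t\to T^{-}$. Setting $t_1=T\in\bigl(t_0,\,t_0+H(t_0)/(\nu H'(t_0))\bigr]$ completes the argument.

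The computation and the concavity estimate are routine; the only point requiring a little care is the bookkeeping of the interval of validity — one must work on the maximal interval on which $H$ exists and is positive so that $y=H^{-\nu}$ and all the estimates above make sense, and then read off that this interval cannot extend beyond $t^{*}$. No substantive obstacle arises beyond this.
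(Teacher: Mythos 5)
The paper itself offers no proof of Lemma \ref{lem3.4}: it is quoted as a known variation of Levine's concavity lemma, with the reference \cite{levine}. Your argument is exactly the standard proof of that lemma---the substitution $y=H^{-\nu}$, the identity $y''=-\nu H^{-\nu-2}\bigl[H''H-(1+\nu)(H')^2\bigr]\le 0$, and the affine majorant coming from concavity together with $y(t_0)>0$, $y'(t_0)<0$---and it is correct in substance. Two bookkeeping remarks. First, what the tangent-line argument actually yields is $t_1\le t_0+H(t_0)/\bigl(\nu H'(t_0)\bigr)$; the bound $t_1\le H(t_0)/\bigl(\nu H'(t_0)\bigr)$ as literally written in the statement is only correct when $t_0=0$ and is a common imprecision in how Levine's lemma is quoted. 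Your version is the right one, and nothing in the paper is affected, since Theorem \ref{theo3.5} uses only that blow-up occurs in finite time, not the numerical value of $t_1$. Second, the inference ``$T\le t^*$ and $y(t)\to 0^{+}$ as $t\to T^{-}$'' is cleaner if split: if $H$ persisted, positive and $C^2$, up to $t^*$, the majorant would force $y(t^*)\le 0$, a contradiction; and as long as $H$ exists on $[t_0,t^*)$ the squeeze $0<y(t)\le y(t_0)+y'(t_0)(t-t_0)$ gives $y(t)\to 0$, i.e. $H(t)=y(t)^{-1/\nu}\to\infty$, as $t$ approaches the first time the hypotheses fail. For an arbitrary maximal interval with $T<t^*$ the claim $y\to 0^{+}$ does not follow by itself, but this is a matter of phrasing inherited from the looseness of the lemma's own statement (in the application, $H$ ceases to exist only because the solution does), not a gap in your mathematics.
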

\begin{theorem}\label{theo3.5}
   Let $(u_{0}, w_{0}) \in \Sigma_{-}$ with $u_0=(v_{0})_{x}$ for some $v_0\in L^2(\mathbb{R})$.  Then the solution $(u(t),w(t))$ of the Cauchy problem (\ref{system1})-(\ref{system3}) with initial data $(u_{0}, w_{0})$ blows up in finite time.
\end{theorem}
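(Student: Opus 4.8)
The plan is to run a Levine-type concavity argument (Lemma \ref{lem3.4}) on a suitable $x$-antiderivative of $u$, feeding in the invariance of $\Sigma_-$ to control the nonlinear term throughout the subcritical range $\mathcal{E}(u_0,w_0)<d$. First, since $(u_0,w_0)\in\Sigma_-$, Lemma \ref{lem3.2} gives $(u(t),w(t))\in\Sigma_-$ for all $t\in[0,T_{\max})$, hence $\mathcal{E}(u(t),w(t))=\mathcal{E}(u_0,w_0)=:E<d$, $\;2\mathcal{I}(u(t))<\mathcal{Q}(u(t))$, and — exactly as in the proof of Lemma \ref{lem3.2} via Lemma \ref{lem3.1} — $\mathcal{I}(u(t))>\tfrac{p+1}{p-1}\,d$ for every such $t$. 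These three facts, together with the conserved energy, are everything one needs about the flow.

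Next I would set $v(x,t)=v_0(x)+\int_0^t w(x,s)\,ds$. Since $w\in C\bigl([0,T_{\max}),H^{s_0-\rho/2}\bigr)\subset C\bigl([0,T_{\max}),L^2\bigr)$ (because $s_0-\tfrac\rho2=\tfrac r2\ge0$) and $v_0\in L^2$, we get $v\in C^1\bigl([0,T_{\max}),L^2\bigr)$ with $v_t=w$ and $v_x=u$, hence in fact $v\in C\bigl([0,T_{\max}),H^{s_0+1}\bigr)$; also $\widehat{\Lambda^{-1}u}=i\,\mathrm{sgn}(\xi)\,\widehat v$ shows $\|B^{-1/2}v\|_{L^2}=\|B^{-1/2}\Lambda^{-1}u\|_{L^2}$. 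For constants $\beta,\gamma>0$ to be fixed, put
\[
H(t)=\|B^{-1/2}v(t)\|_{L^2}^2+\beta(t+\gamma)^2 \;\bigl(\ge \beta\gamma^2>0\bigr).
\]
A computation in the spirit of the conservation-law derivations of Section 2 — using $v_t=w$, $w_t=(Lu+Bg(u))_x$, integration by parts in $x$, $\langle B^{-1}u,Lu\rangle=2\mathcal I(u)$ and $\langle v_x,g(u)\rangle=\langle u,g(u)\rangle=-\mathcal Q(u)$ — then yields
\[
H'(t)=2\langle B^{-1/2}v,B^{-1/2}w\rangle+2\beta(t+\gamma),\qquad
H''(t)=2\|B^{-1/2}w\|_{L^2}^2-2\bigl(2\mathcal I(u)-\mathcal Q(u)\bigr)+2\beta .
\]
The one delicate routine point is justifying the $H''$ identity at the energy-space regularity of $w_t$; this can be done by carrying out the pairings in the $H^{a}\times H^{-a}$ duality with $a=1+\tfrac\rho2-\tfrac r2$, or by approximating the data.

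Now Cauchy--Schwarz applied to $\tfrac12H'(t)=\langle B^{-1/2}v,B^{-1/2}w\rangle+\bigl(\sqrt\beta(t+\gamma)\bigr)\sqrt\beta$ gives $(H')^2\le 4H\bigl(\|B^{-1/2}w\|_{L^2}^2+\beta\bigr)$, hence
\[
H''H-(1+\nu)(H')^2\ \ge\ H\Bigl[H''-4(1+\nu)\bigl(\|B^{-1/2}w\|_{L^2}^2+\beta\bigr)\Bigr].
\]
So it suffices to find $\nu>0$ and $\beta>0$ with $H''\ge4(1+\nu)\bigl(\|B^{-1/2}w\|_{L^2}^2+\beta\bigr)$ on $[0,T_{\max})$. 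Eliminating the kinetic term via the energy identity $\|B^{-1/2}w\|_{L^2}^2=2E-2\mathcal I(u)+\tfrac2{p+1}\mathcal Q(u)$, this reduces to
\[
\frac{2(p-1-4\nu)}{p+1}\,\mathcal Q(u)+8\nu\,\mathcal I(u)-(4+8\nu)E-(2+4\nu)\beta\ \ge\ 0 .
\]
For $0<\nu<\tfrac{p-1}{4}$ the $\mathcal Q$-coefficient is positive, so, dropping that term and using $\mathcal I(u)>\tfrac{p+1}{p-1}d$ from the first paragraph, it is enough that $8\nu\tfrac{p+1}{p-1}d-(4+8\nu)E\ge(2+4\nu)\beta$. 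Since $8\nu\tfrac{p+1}{p-1}d-(4+8\nu)E\to 2(p+1)(d-E)>0$ as $\nu\uparrow\tfrac{p-1}{4}$ — valid whether $E$ is negative or $0\le E<d$ — one fixes $\nu<\tfrac{p-1}{4}$ close enough that this quantity is positive, then takes any $\beta\in\bigl(0,\,\tfrac{8\nu(p+1)d/(p-1)-(4+8\nu)E}{2+4\nu}\bigr]$ and $\gamma=1$; with these choices $H''H-(1+\nu)(H')^2\ge0$ on $[0,T_{\max})$.

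Finally $H''\ge4(1+\nu)\beta>0$, so $H'$ is strictly increasing: either $T_{\max}<\infty$, and the solution already blows up in finite time, or there is $t_0\in[0,T_{\max})$ with $H(t_0)>0$ and $H'(t_0)>0$. In the latter case Lemma \ref{lem3.4} gives $H(t)\to\infty$ as $t\to t_1$ for some finite $t_1\le H(t_0)/(\nu H'(t_0))$; since $\beta(t+\gamma)^2$ stays bounded on $[0,t_1]$ this forces $\|B^{-1/2}v(t)\|_{L^2}\to\infty$, hence $\|v(t)\|_{H^{r/2}}\to\infty$ by coercivity of $B$, which is incompatible with $v\in C\bigl([0,T_{\max}),H^{s_0+1}\bigr)$ unless $T_{\max}\le t_1<\infty$. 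Either way the solution blows up in finite time. I expect the genuine difficulties to be (i) the regularity bookkeeping in the $H''$ identity and (ii) the constant juggling in the last display, which is exactly the step where the potential-well depth $d$ and the invariance of $\Sigma_-$ are indispensable and where the statement improves on the $\mathcal{E}(0)<0$ result recalled in Theorem \ref{theo1.2}.
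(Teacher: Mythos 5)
Your proposal is correct and follows essentially the same route as the paper: the antiderivative $v$ with $H$ built from $\|B^{-1/2}v\|_{L^2}^2$, the invariance of $\Sigma_-$ giving $\mathcal{I}(u(t))>\tfrac{p+1}{p-1}d$, elimination of the kinetic term via energy conservation, and Levine's concavity lemma (Lemma \ref{lem3.4}). The only difference is cosmetic: the paper dispenses with the auxiliary $\beta(t+\gamma)^2$ term by working directly with $\nu=\tfrac{p-1}{4}$, obtaining $H''\geq\tfrac{p+3}{2}\|B^{-1/2}w\|_{L^2}^2+\delta$ with $\delta=(p+1)(d-\mathcal{E}(u_0,w_0))$ and using $(H')^2\leq 2H\|B^{-1/2}w\|_{L^2}^2$.
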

\begin{proof}
    Suppose that $u_0=(v_{0})_{x}$ for some  $v_0\in L^{2}(\mathbb{R})$. Then, the formulation of the problem implies that $u=v_x$ where
    \begin{displaymath}
    v(t,.)=v_0+\int_0^t w(\tau,.) d \tau.
    \end{displaymath}
    To obtain blow up in finite time, by  Theorem 2.1 and the remark after the theorem, it suffices to show that $\|w(t)\|_{H^{s_0-\frac{\rho}{2}}}$
    blows up in finite time. By coercivity of $B$, $\|w(t)\|_{H^{s_0-\frac{\rho}{2}}}$ is equivalent to $\|B^{-1/2} w(t)\|_{L^2}$. On the other hand,
    since
    \begin{displaymath}
    \| B^{-1/2} v(t) \|_{L^2} \leq \| B^{-1/2}v_0\|_{L^2}+\int_0^t  \| B^{-1/2}w(\tau)\|_{L^2} d \tau,
    \end{displaymath}
    it will suffice to show that $H(t)=\frac{1}{2}\| B^{-1/2} v(t) \|_{L^2}$ blows up in finite time. Note that $H(t)$ is the same functional
    as in \cite{babaoglu}. We now proceed  as follows:
    \begin{eqnarray*}
        H'(t)   &=& \langle B^{-1/2}v(t), B^{-1/2}v_{t}(t)\rangle , \\
        H^{\prime \prime}(t)
                &=& \left\Vert B^{-1/2}v_t(t)\right\Vert_{L^2}^2+\langle B^{-1/2}v(t), B^{-1/2}v_{tt}(t)\rangle \\
                &=& \left\Vert B^{-1/2}v_t(t)\right\Vert_{L^2}^2 + \int_\mathbb{R} v(t) \left[ B^{-1}L v_{xx}(t) -\left(|v_{x}(t)|^{p-1}v_{x}(t)\right)_{x}\right] dx \\
                &=& \left\Vert B^{-1/2}v_t(t)\right\Vert_{L^2}^2
                    -\left\Vert B^{-1/2}L^{1/2}v_x(t)\right\Vert_{L^2}^{2}
                    +\left\Vert v_x(t)\right\Vert_{L^{p+1}}^{p+1} \\
                &=& \left\Vert B^{-1/2}w(t)\right\Vert_{L^2}^2-2\mathcal{I}(u(t))+\mathcal{Q}(u(t)) .
    \end{eqnarray*}
    We first prove that $H^{\prime \prime }(t) \geq \delta $ for some positive $\delta $. By the conservation of energy, (\ref{energy}),
    \begin{equation*}
        \mathcal{Q}(u(t))
            =\frac{p+1}{2}\left\Vert B^{-1/2}w(t) \right\Vert^{2}_{L^2}
            +\frac{p+1}{2}\left\Vert B^{-1/2}L^{1/2}u(t) \right\Vert^{2}_{L^2}
            -(p+1) \mathcal{E}\left( u_{0}, w_{0}\right).
    \end{equation*}
    Substituting this into $H^{\prime \prime }(t)$ gives
    \begin{eqnarray*}
        H^{\prime \prime}(t)
            &=&\frac{p+3}{2}\left\Vert B^{-1/2}w(t) \right\Vert^{2}_{L^2}
            +(p-1) \mathcal{I}(u(t))-(p+1) \mathcal{E}\left( u_{0}, w_{0}\right).
    \end{eqnarray*}
    Since $2 \mathcal{I}(u)-\mathcal{Q}(u) <0$ and consequently
    \begin{equation*}
        2m^{(\frac{p+1}{2})}\mathcal{I}(u(t)) < m^{(\frac{p+1}{2})}\mathcal{Q}(u(t))
                                            \leq \mathcal{I}(u(t))^{(\frac{p+1}{2})},
    \end{equation*}
    we have
    \begin{equation*}
       \left( \frac{p+1}{p-1}\right)d=2^{(\frac{2}{p-1})}m^{(\frac{p+1}{p-1})}<\mathcal{I}(u(t)).
    \end{equation*}
    Using this result in $H^{\prime \prime }(t)$ yields
    \begin{equation*}
        H^{\prime \prime }(t)\geq \frac{p+3}{2}\left\Vert B^{-1/2}w(t) \right\Vert^{2}_{L^2}+\delta \geq \delta
    \end{equation*}
    where $\delta=(p+1) (d-\mathcal{E}(u_{0}, w_{0}))>0 $.
    Since $\left( H^{\prime }(t) \right)^{2}\leq 2H(t)\left\Vert B^{-1/2}w(t) \right\Vert^{2}_{L^2}$, we have
    \begin{equation*}
    H(t) H^{\prime \prime }(t) -\frac{p+3}{4}(H^{\prime }(t))^{2}
        \geq H(t) \delta  \geq 0.
    \end{equation*}
    Finally, as $H^{\prime \prime }(t) \geq \delta$, there is some $t_{0}\geq 0$ satisfying $H^{\prime }(t_{0}) >0$ and clearly  $H(t_{0}) >0$. By Levine's Lemma, $H(t) $ and hence, by the remark we made at the beginning of the proof,  $(u(t),w(t))$ blows up in finite time.
\end{proof}

\setcounter{equation}{0}
\section{Parameter Dependent Invariant Sets and Thresholds}
\noindent

In this section we improve the results obtained in Section 3 by considering a parameter-dependent functional instead of $\mathcal{I}(u)$ \cite{liu2, yacheng1, yacheng2, ohta, wang}.  We introduce the augmented functional
\begin{displaymath}
    \mathcal{I}_{\gamma}(u)=\frac{1}{2}\int_{\mathbb{R}}\left(B^{-1/2}L^{1/2}u\right)^{2}dx
        -\frac{\gamma^{2}}{2}\int_{\mathbb{R}}\left(B^{-1/2}u\right)^{2}dx,
\end{displaymath}
again for  $u\in H^{s_{0}}(\mathbb{R})$ with $\gamma^{2}< c_{1}^{2}$ where $c_{1}$ is the coercivity constant in (\ref{bn-a}). Clearly,  $\mathcal{I}_{0}=\mathcal{I}$. Note that replacing  $L$  in  $\mathcal{I}$  by the operator $L-\gamma^{2}I$ gives $\mathcal{I}_{\gamma}$ and this creates a new balance between nonlinear and dispersive effects.     An important instance of this type of augmented functional arises when we consider travelling wave solutions of (\ref{nonlocal}).  The analysis is similar in spirit to that of Section 3, we therefore give only the main steps in the proofs.

As in the previous section, we begin by introducing a constrained variational problem
\begin{equation}
    m(\gamma)= \inf\left\{\mathcal{I}_{\gamma}(u ): u\in H^{s_{0}}(\mathbb{R}),~\mathcal{Q}(u)=1\right\}.  \label{var-a}
\end{equation}
 Since
$\mathcal{I}_\gamma(u) \geq {{c_{1}^{2}-\gamma^{2}}\over {2c_{4}^{2}}} \left\Vert u\right\Vert^2_{H^{s_{0}}}$,
as in the previous section, we deduce that $m(\gamma)$ is positive. Once again, using the homogeneity of the nonlinear term, the variational problem (\ref{var-a}) is converted into
\begin{displaymath}
    \inf \left\{\frac{(\mathcal{I}_{\gamma}(u))^{\frac{p+1}{2}}}{\mathcal{Q}(u)}:  u\in H^{s_{0}}(\mathbb{R}), ~u\neq 0 \right\}=\left[m(\gamma)\right]^{\frac{p+1}{2}}.
\end{displaymath}
We now define the augmented critical energy constant $d(\gamma)$
\begin{displaymath}
    d(\gamma)=\inf \left\{\mathcal{E}(u,w)+\gamma \mathcal{M}(u, w):~(u,w)\in H^{s_{0}}(\mathbb{R})\times H^{s_{0}-{\rho \over 2}}(\mathbb{R}),~~~u\neq 0~,~~2\mathcal{I}_{\gamma}(u)-\mathcal{Q}(u)=0 \right\}
\end{displaymath}
and the augmented  sets of solutions
\begin{eqnarray*}
    && \!\!\!\!\!\!\!\!\!\!\!\!
    \Sigma_{+}(\gamma) =\{ (u,w)\in H^{s_{0}}(\mathbb{R})\times H^{s_{0}-{\rho \over 2}}(\mathbb{R}):\quad \mathcal{E}(u,w)+\gamma \mathcal{M}(u, w) < d(\gamma), \quad 2 \mathcal{I}_{\gamma}(u)-\mathcal{Q}(u) \geq 0 \}, \\
    && \!\!\!\!\!\!\!\!\!\!\!\!
    \Sigma_{-}(\gamma) =\{ (u,w)\in H^{s_{0}}(\mathbb{R})\times H^{s_{0}-{\rho \over 2}}(\mathbb{R}):\quad \mathcal{E}(u,w)+\gamma \mathcal{M}(u, w) < d(\gamma), \quad 2 \mathcal{I}_{\gamma}(u)-\mathcal{Q}(u) <0 \}.
\end{eqnarray*}
We note that the key distinction of the critical energy constant $d$ of Section 3 and the augmented critical energy constant $d(\gamma)$ is the condition $\mathcal{E}(u,w)+\gamma \mathcal{M}(u, w) < d(\gamma)$ involving both the energy and the momentum. A further distinction between Sections 3 and 4 is that  the identity
\begin{displaymath}
     \mathcal{E}(u,w)+\gamma \mathcal{M}(u, w)=\frac{1}{2}\left\Vert B^{-1/2}(w+\gamma u) \right\Vert^{2}_{L^2}+\mathcal{I}_{\gamma}(u)-\frac{1}{p+1}\mathcal{Q}(u)
\end{displaymath}
holds here. Once again, the following lemmas play crucial role in establishing global existence and blow-up results. Their proofs proceed along the same lines as those of Lemmas \ref{lem3.1} and \ref{lem3.2}.
\begin{lemma}\label{lem4.1}
    $d(\gamma)= \left(\frac{p-1}{p+1}\right) 2^{(\frac{2}{p-1})} \left[m(\gamma)\right]^{(\frac{p+1}{p-1})}$.
\end{lemma}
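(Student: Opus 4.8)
The plan is to follow the proof of Lemma~\ref{lem3.1} almost verbatim, the only genuinely new point being the presence of the momentum term $\gamma\,\mathcal{M}(u,w)$ in $d(\gamma)$, which I will remove by completing the square. First I would invoke the identity
\[
    \mathcal{E}(u,w)+\gamma \mathcal{M}(u, w)=\frac{1}{2}\left\Vert B^{-1/2}(w+\gamma u) \right\Vert^{2}_{L^2}+\mathcal{I}_{\gamma}(u)-\frac{1}{p+1}\mathcal{Q}(u)
\]
recorded just before the lemma. Since $\rho\geq 0$ gives $H^{s_{0}}(\mathbb{R})\subset H^{s_{0}-\rho/2}(\mathbb{R})$, for any admissible $u$ the choice $w=-\gamma u$ is admissible in the infimum defining $d(\gamma)$ and makes the first term vanish, while that term is always nonnegative. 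Hence, exactly as $\mathcal{E}(u,w)\geq\mathcal{E}(u,0)$ was used in Section~3,
\[
    d(\gamma)=\inf\left\{\mathcal{I}_{\gamma}(u)-\tfrac{1}{p+1}\mathcal{Q}(u):\ u\in H^{s_{0}}(\mathbb{R}),\ u\neq 0,\ 2\mathcal{I}_{\gamma}(u)-\mathcal{Q}(u)=0\right\},
\]
and on the constraint set $2\mathcal{I}_{\gamma}(u)=\mathcal{Q}(u)$ this reduced functional equals $\bigl(1-\tfrac{2}{p+1}\bigr)\mathcal{I}_{\gamma}(u)=\tfrac{p-1}{p+1}\mathcal{I}_{\gamma}(u)$, the analogue of (\ref{static-energy}).

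For the lower bound I would argue as in Lemma~\ref{lem3.1}: the scale-invariant form of (\ref{var-a}) gives $[m(\gamma)]^{(p+1)/2}\mathcal{Q}(u)\leq(\mathcal{I}_{\gamma}(u))^{(p+1)/2}$ for all $u\neq 0$; substituting $\mathcal{Q}(u)=2\mathcal{I}_{\gamma}(u)$ and dividing by $\mathcal{I}_{\gamma}(u)>0$ (positivity of $\mathcal{I}_{\gamma}$ on $u\neq 0$ follows from $\mathcal{I}_{\gamma}(u)\geq\frac{c_{1}^{2}-\gamma^{2}}{2c_{4}^{2}}\|u\|_{H^{s_{0}}}^{2}$ and $\gamma^{2}<c_{1}^{2}$) yields $\mathcal{I}_{\gamma}(u)\geq 2^{2/(p-1)}[m(\gamma)]^{(p+1)/(p-1)}$, whence $d(\gamma)\geq\bigl(\tfrac{p-1}{p+1}\bigr)2^{2/(p-1)}[m(\gamma)]^{(p+1)/(p-1)}$.

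For the matching upper bound I would take a minimizing sequence $u_{n}$ for (\ref{var-a}) with $\mathcal{Q}(u_{n})=1$ and $\mathcal{I}_{\gamma}(u_{n})\to m(\gamma)$, rescale $u_{n}\mapsto\lambda_{n}u_{n}$ with $\lambda_{n}=\bigl(2\mathcal{I}_{\gamma}(u_{n})\bigr)^{1/(p-1)}$ chosen so that $2\mathcal{I}_{\gamma}(\lambda_{n}u_{n})=\mathcal{Q}(\lambda_{n}u_{n})$, and note that $(\lambda_{n}u_{n},-\gamma\lambda_{n}u_{n})$ is then admissible for $d(\gamma)$ with
\[
    \mathcal{E}(\lambda_{n}u_{n},-\gamma\lambda_{n}u_{n})+\gamma\mathcal{M}(\lambda_{n}u_{n},-\gamma\lambda_{n}u_{n})=\tfrac{p-1}{p+1}\lambda_{n}^{2}\mathcal{I}_{\gamma}(u_{n})=\tfrac{p-1}{p+1}\bigl(2\mathcal{I}_{\gamma}(u_{n})\bigr)^{2/(p-1)}\mathcal{I}_{\gamma}(u_{n})\longrightarrow\bigl(\tfrac{p-1}{p+1}\bigr)2^{2/(p-1)}[m(\gamma)]^{(p+1)/(p-1)}.
\]
Passing to the infimum gives the reverse inequality, and combining the two bounds proves the lemma. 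I do not anticipate a real obstacle: the only delicate step is the reduction to a single variable via the completing-the-square identity (together with the observation that $-\gamma u\in H^{s_{0}-\rho/2}(\mathbb{R})$); after that the computation is word-for-word that of Lemma~\ref{lem3.1} with $\mathcal{I}$, $m$ replaced by $\mathcal{I}_{\gamma}$, $m(\gamma)$ and $\mathcal{V}$ by $\mathcal{E}+\gamma\mathcal{M}$.
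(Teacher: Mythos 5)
Your proof is correct and is essentially the argument the paper intends: the paper omits the proof of Lemma~\ref{lem4.1}, stating only that it proceeds along the same lines as Lemma~\ref{lem3.1}, and you carry out exactly that adaptation, with the momentum term correctly eliminated via the displayed identity and the admissible choice $w=-\gamma u$ (the analogue of $\mathcal{E}(u,w)\geq\mathcal{E}(u,0)$). The scaling computation with $\lambda_{n}=\bigl(2\mathcal{I}_{\gamma}(u_{n})\bigr)^{1/(p-1)}$ and the use of $\gamma^{2}<c_{1}^{2}$ for positivity of $\mathcal{I}_{\gamma}$ are both handled correctly.
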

\begin{lemma}\label{lem4.2}
    Suppose $(u_{0},w_{0}) \in \Sigma_{\pm }(\gamma)$, and let $(u(t),w(t))$ be the solution of the Cauchy problem  (\ref{system1})-(\ref{system3}) with initial value $(u_{0},w_{0})$. Then $(u(t),w(t)) \in \Sigma_{\pm }(\gamma)$  for $0<t<T_{\max }$.
\end{lemma}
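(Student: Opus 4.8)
The plan is to mimic the proof of Lemma \ref{lem3.2} verbatim, with $\mathcal{I}$ replaced by $\mathcal{I}_\gamma$, $\mathcal{E}(u,w)$ replaced by the augmented functional $\mathcal{E}(u,w)+\gamma\mathcal{M}(u,w)$, and $d$ replaced by $d(\gamma)$. The only genuine new ingredient compared with Section 3 is that one must check that the augmented functional $\mathcal{E}(u(t),w(t))+\gamma\mathcal{M}(u(t),w(t))$ is conserved along the flow of (\ref{system1})--(\ref{system3}); this follows at once from the conservation of energy (\ref{energy}) and the conservation of momentum (\ref{momentum}), both established in Section 2. With that in hand, if $(u_0,w_0)\in\Sigma_-(\gamma)$, then $\mathcal{E}(u(t),w(t))+\gamma\mathcal{M}(u(t),w(t))=\mathcal{E}(u_0,w_0)+\gamma\mathcal{M}(u_0,w_0)<d(\gamma)$ for all $t\in[0,T_{\max})$.

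Next I would argue by contradiction exactly as in Lemma \ref{lem3.2}: if the solution leaves $\Sigma_-(\gamma)$, let $t_0>0$ be the first exit time, so that $2\mathcal{I}_\gamma(u(t))-\mathcal{Q}(u(t))<0$ on $[0,t_0)$ and $2\mathcal{I}_\gamma(u(t_0))=\mathcal{Q}(u(t_0))$ by continuity. On $[0,t_0)$ the variational characterization (\ref{var-a}), namely $\mathcal{Q}(u)\le[m(\gamma)]^{-(p+1)/2}(\mathcal{I}_\gamma(u))^{(p+1)/2}$, combined with $\mathcal{Q}(u(t))>2\mathcal{I}_\gamma(u(t))$, forces
\begin{displaymath}
    \mathcal{I}_\gamma(u(t))>2^{(\frac{2}{p-1})}[m(\gamma)]^{(\frac{p+1}{p-1})}=\left(\tfrac{p+1}{p-1}\right)d(\gamma)>0,
\end{displaymath}
where the last equality is Lemma \ref{lem4.1}. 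Letting $t\to t_0^-$ gives $\mathcal{I}_\gamma(u(t_0))>0$, so $u(t_0)\neq 0$, and since $2\mathcal{I}_\gamma(u(t_0))-\mathcal{Q}(u(t_0))=0$, the definition of $d(\gamma)$ yields $\mathcal{E}(u(t_0),w(t_0))+\gamma\mathcal{M}(u(t_0),w(t_0))\ge d(\gamma)$, contradicting the strict inequality from conservation. Hence $(u(t),w(t))\in\Sigma_-(\gamma)$ for all $t\in[0,T_{\max})$.

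For $\Sigma_+(\gamma)$ the argument is the same with the roles reversed: if the solution leaves $\Sigma_+(\gamma)$, since the augmented functional stays strictly below $d(\gamma)$ it must pass into $\Sigma_-(\gamma)$, so there is a first time $t_0$ with $(u(t),w(t))\in\Sigma_+(\gamma)$ on $[0,t_0)$ and $(u(t),w(t))\in\Sigma_-(\gamma)$ on $(t_0,t_0+\epsilon)$; applying the displayed lower bound on $\mathcal{I}_\gamma$ on $(t_0,t_0+\epsilon)$ and letting $t\to t_0^+$ gives $\mathcal{I}_\gamma(u(t_0))>0$, whence $u(t_0)\neq 0$ and $2\mathcal{I}_\gamma(u(t_0))=\mathcal{Q}(u(t_0))$ by continuity, producing the same contradiction with $d(\gamma)$. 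I do not anticipate a serious obstacle here; the only point requiring a word of care is the conservation of the augmented functional, which is immediate since both $\mathcal{E}$ and $\mathcal{M}$ are separately conserved, and the verification that the constraint $2\mathcal{I}_\gamma(u)-\mathcal{Q}(u)$ is a continuous function of $t$ along the $H^{s_0}\times H^{s_0-\rho/2}$-valued solution, which follows from the continuity of the embeddings used to define $\mathcal{I}_\gamma$ and $\mathcal{Q}$.
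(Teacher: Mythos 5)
Your proposal is correct and follows essentially the same route as the paper, which itself states that Lemma \ref{lem4.2} is proved along the same lines as Lemma \ref{lem3.2}: conservation of the augmented functional $\mathcal{E}+\gamma\mathcal{M}$ (immediate from (\ref{energy}) and (\ref{momentum})), the variational bound from (\ref{var-a}) together with Lemma \ref{lem4.1} to force $\mathcal{I}_{\gamma}(u(t_0))\geq\left(\frac{p+1}{p-1}\right)d(\gamma)>0$ at a first exit time, hence $u(t_0)\neq 0$ and the contradiction with the definition of $d(\gamma)$. The only cosmetic point is that on passing to the limit $t\to t_0^{\pm}$ the strict inequality should be stated as $\mathcal{I}_{\gamma}(u(t_0))\geq\left(\frac{p+1}{p-1}\right)d(\gamma)$, which still yields $u(t_0)\neq 0$ as needed.
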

We now state the global existence result, which can be viewed as an augmented version of Theorem \ref{theo3.3}.
\begin{theorem}\label{theo4.3}
     Let $(u_{0},w_{0}) \in \Sigma_{+}(\gamma)$ for some $|\gamma|< c_1$. Then the solution $(u(t),w(t))$ of the Cauchy problem for (\ref{system1})-(\ref{system3}) with initial data $(u_{0},w_{0})$ is global.
\end{theorem}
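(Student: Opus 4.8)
The plan is to transcribe the proof of Theorem~\ref{theo3.3} almost verbatim, with $\mathcal{I}$ replaced by $\mathcal{I}_{\gamma}$ and the conserved energy $\mathcal{E}$ replaced by the conserved quantity $\mathcal{E}+\gamma\mathcal{M}$. First I would let $(u(t),w(t))$ be the solution on the maximal interval $[0,T_{\max})$. By Lemma~\ref{lem4.2}, since $(u_{0},w_{0})\in\Sigma_{+}(\gamma)$, the pair $(u(t),w(t))$ stays in $\Sigma_{+}(\gamma)$ for all $t\in[0,T_{\max})$; in particular $2\mathcal{I}_{\gamma}(u(t))-\mathcal{Q}(u(t))\ge 0$. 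Conservation of energy~(\ref{energy}) together with conservation of momentum~(\ref{momentum}) then gives $\mathcal{E}(u(t),w(t))+\gamma\mathcal{M}(u(t),w(t))=\mathcal{E}(u_{0},w_{0})+\gamma\mathcal{M}(u_{0},w_{0})<d(\gamma)$ for all such $t$.

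Next I would substitute the identity
\[
\mathcal{E}(u,w)+\gamma\mathcal{M}(u,w)=\frac{1}{2}\left\Vert B^{-1/2}(w+\gamma u)\right\Vert_{L^2}^2+\mathcal{I}_{\gamma}(u)-\frac{1}{p+1}\mathcal{Q}(u)
\]
into the inequality just obtained and, using $\mathcal{Q}(u(t))\le 2\mathcal{I}_{\gamma}(u(t))$ exactly as in the derivation of~(\ref{ara1}), arrive at
\[
\frac{1}{2}\left\Vert B^{-1/2}\bigl(w(t)+\gamma u(t)\bigr)\right\Vert_{L^2}^2+\Bigl(\frac{p-1}{p+1}\Bigr)\mathcal{I}_{\gamma}(u(t))<d(\gamma)
\]
for every $t\in[0,T_{\max})$.

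It remains to extract a uniform bound in $H^{s_{0}}(\mathbb{R})\times H^{s_{0}-\rho/2}(\mathbb{R})$. The bound on $u$ is immediate from the coercivity estimate $\mathcal{I}_{\gamma}(u)\ge\frac{c_{1}^{2}-\gamma^{2}}{2c_{4}^{2}}\Vert u\Vert_{H^{s_{0}}}^{2}$ recorded earlier in this section, and this is precisely where the hypothesis $|\gamma|<c_{1}$ is used. For $w$, I would write $B^{-1/2}w=B^{-1/2}(w+\gamma u)-\gamma B^{-1/2}u$, apply the triangle inequality in $L^2$, estimate $\Vert B^{-1/2}u\Vert_{L^2}\le C\Vert u\Vert_{H^{s_{0}}}$ by coercivity of $B$ (using~(\ref{bn-a})--(\ref{bn-b}) as in~(\ref{ara2})), and then combine with $\Vert w\Vert_{H^{s_{0}-\rho/2}}^{2}\le c_{4}^{2}\Vert B^{-1/2}w\Vert_{L^2}^{2}$. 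This yields $\Vert u(t)\Vert_{H^{s_{0}}}+\Vert w(t)\Vert_{H^{s_{0}-\rho/2}}\le C$ on $[0,T_{\max})$, so the blow-up alternative in Theorem~\ref{theo2.1} forces $T_{\max}=\infty$.

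The only substantive new point relative to Theorem~\ref{theo3.3} is disentangling the cross term $w+\gamma u$: unlike in Section~3, the displayed inequality controls only $\Vert B^{-1/2}(w+\gamma u)\Vert_{L^2}$ and $\mathcal{I}_{\gamma}(u)$, so one must first secure the bound on $u$ --- which is exactly what requires $\gamma^{2}<c_{1}^{2}$ --- and only then recover the bound on $w$ by subtraction. Everything else is a line-by-line copy of the earlier argument, which is presumably why the authors state they give only the main steps.
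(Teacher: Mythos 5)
Your proposal is correct and follows essentially the same route as the paper's proof: invariance of $\Sigma_{+}(\gamma)$ via Lemma~\ref{lem4.2}, conservation of $\mathcal{E}+\gamma\mathcal{M}$, the identity rewriting it in terms of $\left\Vert B^{-1/2}(w+\gamma u)\right\Vert_{L^2}$ and $\mathcal{I}_{\gamma}$, the inequality (\ref{ara5}), and coercivity of $L-\gamma^{2}I$ and $B$ (which is where $|\gamma|<c_{1}$ enters) to get uniform $H^{s_{0}}\times H^{s_{0}-\rho/2}$ bounds and invoke the continuation criterion of Theorem~\ref{theo2.1}. The only difference is that you spell out the recovery of the bound on $w$ from the bound on $w+\gamma u$ by subtracting $\gamma B^{-1/2}u$, a step the paper leaves implicit after bounding $\left\Vert w+\gamma u\right\Vert_{H^{s_{0}-\rho/2}}$ and $\left\Vert u\right\Vert_{H^{s_{0}}}$.
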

\begin{proof}
    Let $(u(t),w(t))$ be defined for $t\in \left[0, T_{\max}\right)$. By Lemma \ref{lem4.2}, $(u(t),w(t))\in \Sigma_{+}(\gamma)$. So
    \begin{equation*}
    \mathcal{E}(u(t),w(t))+\gamma \mathcal{M}(u(t), w(t))
       =\frac{1}{2}\left\Vert B^{-1/2}(w(t)+\gamma u(t))\right\Vert_{L^2}^2+\mathcal{I}_{\gamma}(u(t))-\frac{1}{p+1}\mathcal{Q}(u(t))
              < d(\gamma).
    \end{equation*}
    Combining this with $\mathcal{Q}(u(t)) \leq 2\mathcal{I}_{\gamma}(u(t))$ yields
    \begin{equation}
       \frac{1}{2}\left\Vert B^{-1/2}(w(t)+\gamma u(t))\right\Vert_{L^2}^2+\left(\frac{p-1}{p+1}\right)\mathcal{I}_{\gamma}(u(t))  < d(\gamma). \label{ara5}
    \end{equation}
    On the other hand, from the coercivity of $L-\gamma^{2}I$ and $B$ it follows that
    \begin{equation*}
        {{c_{1}^{2}-\gamma^{2}}\over {2c_{4}^{2}}} \left\Vert u(t)\right\Vert^2_{H^{s_{0}}}\leq \mathcal{I}_{\gamma}(u(t)), ~~~~
        {1\over {c_{4}^{2}}} \left\Vert w(t)+\gamma u(t)\right\Vert^2_{H^{s_{0}-{\rho\over 2}}}\leq  \left\Vert B^{-1/2}(w(t)+\gamma u(t))\right\Vert_{L^2}^2.
    \end{equation*}
    Using these two results in (\ref{ara5}) we get
    \begin{equation*}
        {1\over {2c_{4}^{2}}} \left\Vert w(t)+\gamma u(t)\right\Vert^2_{H^{s_{0}-{\rho\over 2}}}
            +\left(\frac{p-1}{p+1}\right){{c_{1}^{2}-\gamma^{2}}\over {2c_{4}^{2}}}\Vert u(t)\Vert_{H^{s_{0}}}^2< d(\gamma),
    \end{equation*}
    from which we conclude that $(u(t),w(t))$ stays bounded in $H^{s_{0}}(\mathbb{R})\times H^{s_{0}-{\rho \over 2}}(\mathbb{R})$ and that $T_{\max}=\infty$.
\end{proof}
The following theorem establishes finite time blow-up of solutions and, once again, relies on Lemma \ref{lem3.4}:
\begin{theorem}\label{theo4.4}
   Let $(u_{0}, w_{0}) \in \Sigma_{-}(\gamma)$ for some $|\gamma|< c_1$, with $u_0=(v_{0})_{x}$ for some $v_0\in L^2(\mathbb{R})$  and  $\gamma \mathcal{M}(u_{0}, w_{0})\geq 0$.  Then the solution $(u(t),w(t))$ of the Cauchy problem (\ref{system1})-(\ref{system3}) with initial data $(u_{0}, w_{0})$ blows up in finite time.
\end{theorem}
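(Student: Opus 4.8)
The plan is to repeat the argument of Theorem \ref{theo3.5} with $\mathcal{I}$ replaced by $\mathcal{I}_{\gamma}$, $d$ by $d(\gamma)$, and the new $\gamma$-dependent contributions controlled by the hypotheses $|\gamma|<c_{1}$ and $\gamma\mathcal{M}(u_{0},w_{0})\geq 0$. First I would set, as in Theorem \ref{theo3.5}, $v(t,\cdot)=v_{0}+\int_{0}^{t}w(\tau,\cdot)\,d\tau$; then $u_{t}=w_{x}$ gives $u=v_{x}$ and $v_{t}=w$, so that $v_{tt}=Lv_{xx}+B(g(v_{x}))_{x}$. Using Theorem \ref{theo2.1} and the remark following it, the coercivity of $B$, and the elementary inequality $\|B^{-1/2}v(t)\|_{L^{2}}\leq\|B^{-1/2}v_{0}\|_{L^{2}}+\int_{0}^{t}\|B^{-1/2}w(\tau)\|_{L^{2}}\,d\tau$, it suffices to prove that $H(t)=\tfrac12\|B^{-1/2}v(t)\|_{L^{2}}^{2}$ blows up in finite time.

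Next I would compute $H'(t)=\langle B^{-1/2}v(t),B^{-1/2}w(t)\rangle$ and, exactly as in Theorem \ref{theo3.5},
\[
H''(t)=\|B^{-1/2}w(t)\|_{L^{2}}^{2}-2\mathcal{I}(u(t))+\mathcal{Q}(u(t)).
\]
Eliminating $\mathcal{Q}(u(t))$ via conservation of energy (\ref{energy}) gives $H''(t)=\tfrac{p+3}{2}\|B^{-1/2}w(t)\|_{L^{2}}^{2}+(p-1)\mathcal{I}(u(t))-(p+1)\mathcal{E}(u_{0},w_{0})$. Then, invoking Lemma \ref{lem4.2}, the pair $(u(t),w(t))$ stays in $\Sigma_{-}(\gamma)$, so $2\mathcal{I}_{\gamma}(u(t))<\mathcal{Q}(u(t))$; together with the homogeneous (variational) form of (\ref{var-a}) and Lemma \ref{lem4.1} this yields $\mathcal{I}_{\gamma}(u(t))>\tfrac{p+1}{p-1}d(\gamma)$, just as in the proofs of Lemma \ref{lem4.2} and Theorem \ref{theo3.5}. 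Since $\mathcal{I}(u)=\mathcal{I}_{\gamma}(u)+\tfrac{\gamma^{2}}{2}\|B^{-1/2}u\|_{L^{2}}^{2}\geq\mathcal{I}_{\gamma}(u)$ and, by the membership $(u_{0},w_{0})\in\Sigma_{-}(\gamma)$ together with $\gamma\mathcal{M}(u_{0},w_{0})\geq 0$, $\mathcal{E}(u_{0},w_{0})\leq d(\gamma)-\gamma\mathcal{M}(u_{0},w_{0})<d(\gamma)$, I would obtain
\[
H''(t)\geq\tfrac{p+3}{2}\|B^{-1/2}w(t)\|_{L^{2}}^{2}+\delta,\qquad \delta=(p+1)\bigl(d(\gamma)-\mathcal{E}(u_{0},w_{0})-\gamma\mathcal{M}(u_{0},w_{0})\bigr)>0,
\]
the extra term $\tfrac{\gamma^{2}}{2}\|B^{-1/2}u\|_{L^{2}}^{2}$ being simply discarded because it has the favorable sign.

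Finally, the concavity step is verbatim that of Theorem \ref{theo3.5}: from $(H'(t))^{2}\leq 2H(t)\|B^{-1/2}w(t)\|_{L^{2}}^{2}$ one gets $H(t)H''(t)-\tfrac{p+3}{4}(H'(t))^{2}\geq\delta H(t)\geq 0$, that is $H''H-(1+\nu)(H')^{2}\geq 0$ with $\nu=\tfrac{p-1}{4}>0$; since $H''\geq\delta$ there is a $t_{0}$ with $H'(t_{0})>0$ and $H(t_{0})>0$, so Lemma \ref{lem3.4} forces $H(t)\to\infty$ at some finite $t_{1}$, whence the solution blows up in finite time. The one place requiring care is the sign bookkeeping for the $\gamma$-terms: one must check that $|\gamma|<c_{1}$ enters only through the coercivity of $L-\gamma^{2}I$ (so that $m(\gamma),d(\gamma)>0$ and Lemma \ref{lem4.2} applies), that $\mathcal{I}(u)-\mathcal{I}_{\gamma}(u)=\tfrac{\gamma^{2}}{2}\|B^{-1/2}u\|_{L^{2}}^{2}$ contributes to $H''$ with the right sign, and that the hypothesis $\gamma\mathcal{M}(u_{0},w_{0})\geq 0$ is precisely what converts the defining inequality $\mathcal{E}+\gamma\mathcal{M}<d(\gamma)$ of $\Sigma_{-}(\gamma)$ into the positivity of $\delta$; apart from this, everything is a repetition of the proof of Theorem \ref{theo3.5}.
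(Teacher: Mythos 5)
Your proposal is correct and follows essentially the same route as the paper's proof: the same functional $H(t)=\tfrac12\|B^{-1/2}v(t)\|_{L^2}^2$, the same energy substitution, the invariance of $\Sigma_{-}(\gamma)$ giving $\mathcal{I}_{\gamma}(u(t))>\tfrac{p+1}{p-1}d(\gamma)$, the hypothesis $\gamma\mathcal{M}(u_0,w_0)\geq 0$ producing $\delta>0$, and Lemma \ref{lem3.4} with $\nu=\tfrac{p-1}{4}$. The only cosmetic difference is that you discard the nonnegative term $\tfrac{\gamma^{2}}{2}\|B^{-1/2}u\|_{L^2}^{2}$ at the outset via $\mathcal{I}(u)\geq\mathcal{I}_{\gamma}(u)$, whereas the paper carries it explicitly in $H''(t)$ before dropping it, which changes nothing in substance.
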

\begin{proof}
    Since the proof is similar to that of Theorem \ref{theo3.5}, we only point out some modifications. As before we consider the functional
    $H(t) =\frac{1}{2}\left\Vert B^{-1/2}v(t)\right\Vert_{L^2}^2$. Then
    \begin{eqnarray*}
        H'(t)   &=& \langle B^{-1/2}v(t), B^{-1/2}v_{t}(t)\rangle, \\
        H^{\prime \prime}(t)
                &=& \left\Vert B^{-1/2}v_{t}(t)\right\Vert_{L^2}^2
                    -\left\Vert B^{-1/2}L^{1/2}v_{x}(t)\right\Vert_{L^2}^{2}+\left\Vert v_{x}(t)\right\Vert_{L^{p+1}}^{p+1} \\
               &=&\frac{p+3}{2}\left\Vert B^{-1/2}w(t) \right\Vert^{2}_{L^2}
                     +\frac{(p-1)}{2}\gamma^2  \left\Vert B^{-1/2}u(t)\right\Vert_{L^2}^{2}
                    +(p-1) \mathcal{I}_{\gamma}(u(t))-(p+1) \mathcal{E}\left( u_{0}, w_{0}\right ).
    \end{eqnarray*}
    Since
    \begin{equation*}
        \left(\frac{p+1}{p-1}\right)d(\gamma)=2^{(\frac{2}{p-1})}\left[m(\gamma)\right]^{(\frac{p+1}{p-1})}<\mathcal{I}_{\gamma}(u(t)),
    \end{equation*}
    we have
    \begin{eqnarray*}
        H^{\prime \prime }(t)&=& \frac{p+3}{2}\left\Vert B^{-1/2}w(t) \right\Vert^{2}_{L^2}
            +\frac{p-1}{2}\gamma^2  \left\Vert B^{-1/2}u(t)\right\Vert_{L^2}^{2} \\
           &~& +(p+1)\gamma \mathcal{M}(u_{0}, w_{0})+(p+1)\left[d(\gamma)-\mathcal{E}\left( u_{0}, w_{0}\right)-\gamma \mathcal{M}(u_{0}, w_{0})
            \right].
    \end{eqnarray*}
    Finally, as $\gamma \mathcal{M}(u_{0}, w_{0})\geq 0$, we have
    \begin{equation*}
    H^{\prime \prime }(t) \geq \frac{p+3}{2}\left\Vert B^{-1/2}w(t) \right\Vert^{2}_{L^2}+\delta \geq \delta,
    \end{equation*}
    with $\delta=\left[d(\gamma)-\mathcal{E}\left( u_{0}, w_{0}\right)-\gamma \mathcal{M}(u_{0}, w_{0})\right]>0 $.  The rest of the proof is the same as in Theorem \ref{theo3.5}.
\end{proof}

Finally, we compare the present results with those of Section 3 and make some closing remarks:
\begin{itemize}
\item A simple calculation of the Euler-Lagrange equations shows that  minimizers for $\mathcal{I}(u)$ and $\mathcal{I}_{\gamma}(u)$ are in fact standing and traveling wave solutions of (\ref{nonlocal}), respectively. The Boussinesq equation and its certain generalizations have explicitly known traveling wave solutions and this enables one to compute $d(\gamma)$ explicitly. However, in the general case that we have considered in this paper, finding explicit solutions is obviously not possible.
\item We emphasize that Theorem \ref{theo4.3} extends Theorem \ref{theo3.3} even though their statements are very similar. We now consider the sets $\Sigma_{+}$ and $\Sigma_{+}(\gamma)$  to briefly explain the argument related to  Theorems \ref{theo3.3} and \ref{theo4.3}. Clearly for $u \neq 0$ and $\gamma \neq 0$ we have $\mathcal{I}_{\gamma}(u) < \mathcal{I}(u)$ and thus $d(\gamma) < d (0) = d$. So the constraint $2\mathcal{I}_{\gamma}(u)\geq  \mathcal{Q}(u)$ of the set $\Sigma_{+}(\gamma)$ implies that of $\Sigma_{+}$, namely $2\mathcal{I}(u)\geq \mathcal{Q}(u)$. On the other hand, we note that the inequality $\mathcal{E}(u, w)+\gamma \mathcal{M}(u,w) < d(\gamma)$ of the set $\Sigma_{+}(\gamma)$, that is, $\mathcal{E}(u, w) < d(\gamma)-\gamma \mathcal{M}(u,w)$ implies $\mathcal{E}(u, w) < d$ if  $ d(\gamma)-\gamma \mathcal{M}(u,w)\leq d$. So, for $(u_{0},w_{0})\in \Sigma_{+}(\gamma)$  we get $(u_{0},w_{0})\in \Sigma_{+}$ if $d(\gamma)-\gamma \mathcal{M}(u_{0},w_{0})\leq d$, hence Theorem \ref{theo3.3} already applies. We conclude that for the initial data with $d(\gamma)-\gamma \mathcal{M}(u_{0},w_{0})> d$ the conclusion of Theorem \ref{theo4.3} does not follow from Theorem \ref{theo3.3}. For this case Theorem \ref{theo4.3} is an improved result.
\item We make a similar observation about Theorems \ref{theo3.5} and \ref{theo4.4}. Since $\gamma \mathcal{M}(u_{0},w_{0})\geq 0$ we have
    \begin{displaymath}
    \mathcal{E}(u_{0},w_{0})< d(\gamma) - \gamma \mathcal{M}(u_{0},w_{0})<d-\gamma \mathcal{M}(u_{0},w_{0}) .
    \end{displaymath}
    Thus the  level of the energy $\mathcal{E}(u_{0},w_{0})$ for the solutions in the set $\Sigma_{-}(\gamma)$  is lower than that of $\Sigma_{-}$. On the other hand, as $\mathcal{I}_{\gamma}(u) < \mathcal{I}(u)$ for $u \neq 0$ and $\gamma \neq 0$  Theorem \ref{theo4.4} sets a new balance between nonlinear and dispersive effects and one may have blow-up even for smaller $\mathcal{Q}(u_{0})$.
\item Finally we point out that, in comparison with Theorem \ref{theo3.5}, we prove Theorem \ref{theo4.4} under the extra assumption $\gamma \mathcal{M}(u_{0},w_{0})\geq 0$. Ideally one would expect a blow-up result in Theorem \ref{theo4.4} as in Theorem \ref{theo3.5}; but this cannot be generally true and stability results for traveling waves provide a clue in that direction. As in \cite{liu2} a blow up result like that of Theorem \ref{theo3.5} can be used to show the orbital instability of traveling waves. On the other hand, for the Boussinesq and similar equations it is known that traveling waves are orbitally stable whenever $d(\gamma)$ is convex which in turn holds for sufficiently away from $0$. Hence one cannot expect blow-up without some extra assumption as $\gamma \mathcal{M}(u_{0},w_{0})\geq 0$ for $\gamma$ within that range.
\end{itemize}
\vspace*{10pt}

\noindent
{\bf Acknowledgement}: This work has been supported by the Scientific and Technological Research Council of Turkey (TUBITAK) under the project TBAG-110R002.



\begin{thebibliography}{99}

 \bibitem{babaoglu} C. Babaoglu, H.A. Erbay, A. Erkip,
        Global existence and blow-up of solutions for a general class of doubly dispersive nonlocal nonlinear wave equations,  Nonlinear Anal. 77 (2013) 82-93.

 \bibitem{payne} L.E. Payne, D.H. Sattinger,
        Saddle Points and Instability of Nonlinear Hyperbolic Equations, Israel Journal of Mathematics 22 (1975) 273-303.

 \bibitem{sam1} A.M. Samsonov, E.V. Sokurinskaya,
        On the excitation of a longitudinal deformation soliton in a nonlinear elastic solid, Sov. Phys. Tech. Phys. 33 (1988) 989-991.
 \bibitem{sam2} A.M. Samsonov,
        Nonlinear Strain Waves in Elastic Waveguides, in: A. Jeffrey, J. Engelbrecht (Eds.), Nonlinear Waves in Solids,   Springer, Wien 1994, pp. 349-382.

 \bibitem{bou} J. Boussinesq,
        Thorie des ondes et des remous qui se propagent le long d'un canal rectangulaire horizontal, en communiquant au liquide contenu dans ce canal des vitesses sensiblement pareilles de la surface au fond, J. Math. Pures Appl. 17 (1872) 55–108.

 \bibitem{liu1} Y. Liu,
        Instability and blow-up of solutions to a generalized Boussinesq  equation,
        SIAM J. Math. Anal. 26 (1995) 1527-1546.

 \bibitem{liu2} Y. Liu,
        Strong instability of solitary-wave solutions of a generalized Boussinesq  equation, J. Differential Equations 164 (2000) 223-239.
 \bibitem{yacheng1} L. Yacheng,
        On potential wells and vacuum isolating of solutions for semilinear wave equations,
        J. Differential Equations 192 (2003) 155-169.

 \bibitem{yacheng2} L. Yacheng, X. Runzhang
        Global existence and blow up of solutions for Cauchy problem of generalized Boussinesq  equation,
        Physica D 237 (2008) 721-731.

 \bibitem{ohta} Y. Liu, M. Ohta, G. Todorova,
        Strong instability of solitary waves for nonlinear Klein–Gordon equations and generalized Boussinesq equations,
        Annales de l'institut Henri Poincare (C) Analyse non linéaire  24 (2007) 539-548.

 \bibitem{wang} Y. Wang, C. Mu, J. Deng,
        Strong instability of solitary-wave solutions for a nonlinear Boussinesq equation,
        Nonlinear Anal. 69 (2008) 1599-1614.

\bibitem{adams} R.A. Adams, Sobolev Spaces,  Academic Press,  San Diego, 1978.

 \bibitem{duruk1} N. Duruk, H.A. Erbay, A. Erkip,
        Global existence and blow-up for a class of nonlocal nonlinear Cauchy problems arising in elasticity, Nonlinearity  23 (2010) 107-118.

 \bibitem{levine}  H.A. Levine,
        Instability and nonexistence of global solutions to nonlinear wave equations of the form $Pu_{tt}=-Au+f(u)$,
        Trans. Amer. Math. Soc. 192 (1974) 1-21.

\end{thebibliography}
\end{document}